\documentclass{amsart}
\usepackage{graphicx}
\usepackage{amssymb}
\usepackage{amsfonts}
\usepackage{hyperref}
\usepackage{mathrsfs}
\swapnumbers
\newtheorem{theorem}{Theorem}[section]
\newtheorem{lemma}[theorem]{Lemma}
\newtheorem{corollary}[theorem]{Corollary}
\newtheorem{proposition}[theorem]{Proposition}
\theoremstyle{definition}

\newtheorem{assumption}[theorem]{Assumption}
\newtheorem{remark}[theorem]{Remark}

\numberwithin{equation}{section}
\theoremstyle{plain}

\numberwithin{equation}{section} 
\numberwithin{figure}{section} 
\theoremstyle{plain}
\theoremstyle{plain}
\theoremstyle{remark}
\newtheorem*{acknowledgement*}{Acknowledgement}
\theoremstyle{example}

\newcommand{\cG}{{\mathcal G}}

\newcommand{\cL}{{\mathcal L}}

\newcommand{\te}{{\theta}}

\newcommand{\Om}{{\Omega}}
\newcommand{\om}{{\omega}}

\newcommand{\sig}{{\sigma}}

\newcommand{\eps}{{\epsilon}}

\newcommand{\bbE}{{\mathbb E}}

\newcommand{\bbN}{{\mathbb N}}
\newcommand{\bbP}{{\mathbb P}}
\newcommand{\bbR}{{\mathbb R}}

\newcommand{\bbZ}{{\mathbb Z}}

\begin{document}
\title[Local limit theorem]{Local limit theorem and Edgeworth expansions for inhomogeneous random walks on $GL(d,\bbR)$}   
 \vskip 0.1cm
 \author{Yeor Hafouta}
\address{
Department of Mathematics, Ben-Gurion University}
\email{yeor.hafouta@mail.huji.ac.il}%

\thanks{ }
\dedicatory{  }
 \date{\today}

\maketitle
\markboth{Y. Hafouta}{Local CLT} 
\renewcommand{\theequation}{\arabic{section}.\arabic{equation}}
\pagenumbering{arabic}

\begin{abstract}
We prove a non-lattice local central limit theorem  and Edgeworth expansions for the logarithm of the norms of products of invertible independent random matrices. Our conditions include a contraction assumption, an assumption that supports of the matrices are ``large enough" and their distributions are sufficiently regular. As a byproduct of the proof we are also able to provide a different proof to the optimal rates in the CLT proved in \cite{MatBE}. Like in \cite{MatBE} we provide several sufficient conditions for contraction.
\end{abstract}

\section{Introduction}
The classical central limit theorem (CLT) states that if $(X_j)$ is an iid zero mean sequence of random variables in $L^2$ and $\sig^2=\bbE[X_1^2]>0$ then $(\sig\sqrt n)^{-1}S_n$ converges in distribution to the standard normal law, where $S_n=\sum_{j=1}^nX_j$. The CLT concerns the asymptotic behavior of probabilities of the form $\mathbb P(S_n\in I_n)$ for intervals $I_n$ whose size if of order $\sqrt n$. In this paper we are interested in the local CLT which concerns the asymptotic behavior of probabilities of the form $\mathbb P(S_n\in I)$ with intervals $I$ with unit length. The local CLT has origins in the famous De Moivere-Laplace theorem, and it actually predates the CLT.

In the past decades the local CLT has been  studied extensively for partial sums generated by sufficiently fast mixing homogeneous Markov chains and autonomous chaotic dynamical systems, see \cite{HH}. In this paper we are interested in the local limit theorem for random variables of the form 
$$
S_n(x)=\ln\|A_n\cdots A_2\cdot A_1x\|
$$
where $(A_j)$ is a sequence of independent invertible matrices and $x$ is a unit vector.
Limit theorems for products of iid random matrices $A_j$ have been studied extensively in the past. The CLT for positive matrices was  obtained in \cite{FK61} (even for mixing stationary matrices), see also \cite{Herve}. Since then, there have been many works on limit theorems for products of iid invertible random matrices and processes with values in other groups,  including Berry-Esseen theorems (optimal CLT rates) and local limit theorems  (see \cite{BB,BeQu,BougLac, BRLLT05, BRLLT23, PelMat1, PelMat2, GramaBE, MatAIHP, Guiv2015, Hough2019} and references therein). Recently, also Edgeworth expansions have been obtained (see  \cite{FP, EdgeMat1, EdgeMat2}). 

A natural question concerns the limiting behavior of $S_n(x)$ defined above for independent but not necessarily identically distributed random matrices $A_j$.
The non-stationary additive setting (i.e. partial sums)   has been studied extensively in recent years (see, for instance, \cite{CR07, LD1, DolgHaf PTRF 1, DolgHaf PTRF 2, DolgHaf LLT, DS,Nonlin,HafSPA, NonUBE, NonUEdge, NewBE, HNTV,MPP LLT1, MPP LLT2,NSV12, NTV ETDS 18, CLT3} and references therein and  also \cite{Dob,Pelig,SeVa}). Despite that 
very little is known about the asymptotic behavior of products of non-stationary matrices. 
The main problem is that all existing techniques heavily rely on tools from the theory of stationary processes (see \cite{Guiv2015,HH}). Indeed, (see \cite{Guiv2015}) a typical way is to view the product  acting on a unit vector as a Markov process on the projective space. Another approach is  based on ideas in \cite{KPZ}, that is to view the entire problem as an additive sum over a stationary Bernoulli shift (see \cite{LimMat, PelMat1}). 

Non iid matrices  were addressed for the first time in the recent papers
\cite{Goldsh, GordKlep1, GordKlep2, GordKlep3, MatBE}. In \cite{Goldsh} sufficient conditions for Markov dependent non-stationary matrices $(A_j)$ were provided to ensure that 
$$
\ln\|A_n\cdots A_{2}\cdot A_1\|
$$
grows linearly fast. In 
\cite{GordKlep1} and \cite{GordKlep2} analogous results to the law of large numbers were obtained. 
However, in general exponential growth rates are not expected. For instance, the norms of the matrices might all be smaller than some $c<1$. Yet, the logarithm of the product might exhibit a non-trivial asymptotic behavior. 

As for the CLT, the first result we are aware of   is \cite{GordKlep3}. In that paper, the CLT was obtained for random matrices with values in $\text{SL}(2,\bbR)$ under the same conditions in \cite{GordKlep1}, which ensure that the variance of $\ln\|A_n\cdots A_{2}\cdot A_1\|
$ grows linear fast. Comparing this with the CLT for independent random variables $X_j$ the linear growth is a strong conclusion, since the variance can grow arbitrarily slow. 
Recently, in \cite{MatBE} we proved Berry-Esseen theorems when the action of the matrices is logarithmically contracting on average, a condition which we were able to verify under certain circumstances.

In this paper we  prove local limit theorems and first order correction terms in the CLT (aka Edgeworth expansions)  for invertible logarithmically contracting matrices without restrictions on the growth rate of the variance and without assumptions that lead to exponential growth of the norms. We will also discuss high order Edgeworth expansions under additional conditions. In addition to the contraction assumption, we also assume some regularity properties on the distributions of each $A_j$ and that their supports are large enough. Both conditions are relatively moderate. As a byproduct of the proof we provide a different proof to all of the results in \cite{MatBE}.

Our approach to proving the local CLT and the first order expansions is based on estimating the rate of decay of the characteristic functions of $S_n(x)$. Using the logarithmic contraction and ideas in \cite{DolgHaf PTRF 2} we prove a complex sequential Perron-Frobenius theorem for the underlying Markov operators that appear in the formula of the characteristic functions. Using this, sequential exponential convergence of the Markov operators on the flag space (which we prove), and ideas in \cite{DolgHaf PTRF 1, DolgHaf PTRF 2} we prove the desired control over the characteristic function around the origin. This is what allows us to recover all the results in \cite{MatBE}. 

In order to estimate the  characteristic function away from the origin, we show that the characteristic function of $S_n(x)$ decays exponentially fast in $n$ (and not only in $\text{Var}(S_n(x))$), uniformly on compact subsets of $\mathbb R\setminus\{0\}$. This is much stronger than what is needed to prove the local CLT, and is interesting by its own. In the proof of these exponential bounds we use ideas in \cite{DolgHaf LLT}. We first prove a Lasota-Yorke inequality and then relate the size of the norms of the complex operators controlling the characteristic functions to their values at single points. The reasoning behind this part is similar to  \cite{DolgHaf LLT}, but the arguments differ and rely on certain regularity properties of the distributions of the matrices. 

The second part is to show that the operator norms are uniformly smaller than $1$ after a few iterates. As opposed to \cite{DolgHaf LLT}, this part does not rely on showing that  the underlying norm-cocycle $\sigma(A,y)=\ln\left(\|Ay\|/\|y\|\right)$ is  irreducible. The problem is that  for a unit vector $y$, $\sigma(A,y)$ does not depend only on the direction of $Ay$ (but exactly the opposite). Thus, the perturbation of the Markov operators are not taken with respect to functions on the projective space. To overcome that problem we show directly that the norms of the complex operators are not too close to $1$ when the distribution of $A_j$ is sufficiently regular and the supports of $A_j$'s are large enough. The main idea in the proof is to show that when the norm is close to $1$ then after enough steps the two point action generated by taking an independent copy of $(A_j)$ is close in an appropriate sense to a projective function. Then we use that the  function  $\sigma(A,y)$ is not projective in $Ay$.

Our approach to proving high order Edgeworth expansions also involves decay rates of  the characteristic functions, but on long intervals. Using connectivity assumption on the supports of the matrices $A_j$ we define adapted norms to each argument $t$ of the characteristic functions and then show that we get enough contraction after some number $k(t)$ iterates. Then we assume that the supports of the products $A_{j+n-1}\cdots A_{j}$ has sufficient amount of growth, in an appropriate sense, that will ensure that we can control $k(t)$ and get the desired decay of the   characteristic functions on long, time dependent, intervals.

\section{Preliminaries and main results}
Let $(A_j)_{j\geq 1}$ be a sequence of independent  invertible random matrices of sizes $d\times d, d\geq 2$ (which is not necessarily identically distributed). Denote by $d(\cdot,\cdot)$ the standard metric on the real projective space $\mathbb P^{d-1}(\mathbb R)$. 

\begin{assumption}[Uniform boundedness]\label{Ass0}
 The random variables $N(A_j)=\max(\|A_j\|, \|A_j^{-1}\|)$ are uniformly bounded.   
\end{assumption}
   
\begin{assumption}[Average logarithmic contraction]\label{Ass1}
There exist $n_0\in\bbN$ and $\delta>0$ such that 
$$
\sup_j\sup_{\bar x\not=\bar y}\bbE\left[\ln\left(\frac{d(A_{j,n_0}\bar x,A_{j,n_0}\bar y)}{d(\bar x,\bar y)}\right)\right]\leq -\delta
$$
where $A_{j,n}=A_{j+n-1}\cdots A_j$. Here $\bar x,\bar y\in\mathbb P^{d-1}(\mathbb R)$.
\end{assumption}



Next, let us fix a unit vector $x$ and let 
$$
S_n(x)=\ln\|A_n\cdots A_2\cdot A_1x\|.
$$ 
In this paper we are interested in  local limit theorems and first order correction terms in the CLT for $S_n(x)$, but we begin with the growth of variance of $S_n(x)$.
Denote 
$$
\sigma_n(x)=\sqrt{\text{Var}(S_n(x))}.
$$
Denote also $\sigma(A,y)=\ln\left(\frac{\|Ay\|}{\|y\|}\right)$.
Let the Markov chain $(Y_j)$ given by  $Y_j=(A_j,A_{j-1}\cdots A_1x)$. 
\begin{theorem}[Growth of variance]\label{VAR}
Under Assumptions  \ref{Ass0} and  \ref{Ass1} we have $\lim_{n\to\infty}\sigma_n(x)=\infty$ if and only if there are uniformly bounded functions $R_j$ and $u_j$
such that $(u_j(Y_{j-1},Y_j))$ is a martingale with $\sum_j\text{Var}(u_j(Y_{j-1},Y_j))<\infty$ and
$$
\sigma(Y_j)=u_j(Y_{j-1},Y_j)+R_{j}(Y_j)-R_{j-1}(Y_{j-1}).
$$
\end{theorem}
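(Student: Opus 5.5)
The plan is to prove the dichotomy in the form: $\sigma_n(x)$ stays bounded if and only if a decomposition as in the statement exists, and otherwise $\sigma_n(x)\to\infty$. Two remarks on reading the statement. First, "$\lim\sigma_n(x)=\infty$ if and only if there are $R_j,u_j$\dots" must mean that $\sigma_n(x)\to\infty$ if and only if \emph{no} such decomposition exists. Second, the decomposition has to be understood for the centered cocycle $\sigma(Y_j)-\bbE[\sigma(Y_j)]$, since the means cannot be absorbed into a bounded coboundary or a martingale. The route is a sequential Gordin-type martingale--coboundary decomposition. The key analytic input is exponential decorrelation on the projective space, which I would derive from Assumption \ref{Ass1}.

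\textbf{Step 1: sequential contraction.} Write $X_j=A_j\cdots A_1x$ and $\bar X_j$ for its direction. Since $\ln\|Ay\|/\|y\|$ depends only on $A$ and $\bar y$, the future of $(Y_k)_{k>j}$ given $Y_j$ depends only on $\bar X_j$. For a small $\alpha>0$, I would show that $\sup_j\sup_{\bar x\ne\bar y}\bbE[d(A_{j,n}\bar x,A_{j,n}\bar y)^\alpha]/d(\bar x,\bar y)^\alpha\le C\rho^n$ for some $\rho<1$. This follows from Assumption \ref{Ass1} via the elementary inequality $e^{\alpha s}\le 1+\alpha s+\alpha^2 s^2e^{\alpha|s|}$ applied to the log-ratio. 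The log-ratio is uniformly bounded by Assumption \ref{Ass0}, so taking $\alpha$ small gives a one-block contraction factor $1-\alpha\delta/2$, and iterating along the blocks of length $n_0$ gives the bound. Consequently the sequential Markov operators $\cL_{j,n}g(\bar y)=\bbE[g(A_{j,n}\bar y)]$ contract the $\alpha$-Hölder seminorm exponentially fast, uniformly in $j$. I expect this uniform non-stationary contraction to be the main obstacle. The remaining steps are essentially bookkeeping once it is in hand.

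\textbf{Step 2: the decomposition.} Let $\tilde\sigma_k=\sigma(Y_k)-\bbE[\sigma(Y_k)]$. The map $\bar y\mapsto\bbE[\sigma(A_k,y)]$ is Lipschitz on $\mathbb P^{d-1}(\bbR)$ by Assumption \ref{Ass0}. Hence, by Step 1,
$$
\bigl|\bbE[\tilde\sigma_k\,|\,Y_j]\bigr|\le C\rho^{k-j}.
$$
The constant here is uniform because $\bbE[\tilde\sigma_k|Y_j]$ is a difference of values of a function whose oscillation decays, and centering makes its mean zero. Define the uniformly bounded functions $R_j(Y_j)=\sum_{k>j}\bbE[\tilde\sigma_k|Y_j]$, and set $u_j=\tilde\sigma_j-R_j(Y_j)+R_{j-1}(Y_{j-1})$. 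A direct computation gives $\bbE[u_j\,|\,Y_0,\dots,Y_{j-1}]=0$, using the Markov property and $\bbE[R_j(Y_j)|Y_{j-1}]=R_{j-1}(Y_{j-1})-\bbE[\tilde\sigma_j|Y_{j-1}]$. So $(u_j)$ is a bounded martingale difference sequence, and $u_j$ is a function of $(Y_{j-1},Y_j)$.

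\textbf{Step 3: variance identity and conclusion.} Summing the decomposition gives $S_n(x)-\bbE S_n(x)=\sum_{j\le n}u_j+R_n-R_0$. By orthogonality of martingale differences and boundedness of the $R_j$,
$$
\sigma_n^2(x)=\sum_{j\le n}\text{Var}(u_j)+O\Bigl(1+\Bigl(\sum_{j\le n}\text{Var}(u_j)\Bigr)^{1/2}\Bigr).
$$
If $\sum_j\text{Var}(u_j)<\infty$, the decomposition of the statement holds and $\sigma_n(x)$ is bounded. Otherwise the identity forces $\sigma_n(x)\to\infty$, since the partial sums are monotone; in particular $\sigma_n(x)$ cannot oscillate.

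Conversely, suppose some decomposition with bounded $R_j$ and a martingale $u_j$ with summable variances exists. Then $S_n(x)-\bbE S_n(x)$ is an $L^2$-bounded martingale plus a bounded term, so $\sup_n\sigma_n(x)<\infty$. This gives the equivalence: $\sigma_n(x)\to\infty$ if and only if no such decomposition exists.
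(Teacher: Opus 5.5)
Your proof is correct. Its skeleton matches the paper's: exponential contraction on $\mathbb P^{d-1}(\mathbb R)$, followed by the Gordin-type decomposition with exactly the paper's $R_j(Y_j)=\sum_{k>j}\mathbb E[\tilde\sigma_k|Y_j]$ and $u_j$ from \eqref{MartCob}. As you anticipated, \eqref{MartCob} is written for the centered cocycle. The two routes differ in how the ingredients are obtained.

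For the contraction estimate (Lemma~\ref{LLL}), the paper:
\begin{itemize}
\item splits the log-ratio into a drift part and a bounded martingale part;
\item imports a large-deviation bound for the drift and uses Azuma--Chernoff for the martingale part;
\item converts the resulting tail bound into a moment bound via Cauchy--Schwarz and the crude estimate $X\le C_\epsilon N^{\epsilon n}$.
\end{itemize}
You instead apply $e^{\alpha s}\le 1+\alpha s+\alpha^2s^2e^{\alpha|s|}$ to the one-block log-ratio. That log-ratio is indeed uniformly bounded, since the projective action of $A$ is bi-Lipschitz with constant at most $(\|A\|\|A^{-1}\|)^2$. This gives a factor $1-\alpha\delta/2$ per block of length $n_0$, and you iterate by independence. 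Your argument is shorter, self-contained, and gives explicit constants.

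In Step 2 you only need two things: the decay of the oscillation of $\mathcal L_{j+1}^{k-j-1}$ applied to the Lipschitz function $\bar y\mapsto\mathbb E[\sigma(A_k,y)]$, and centering. So you never need the sequential equivariant measures $\nu_j$ or the estimates \eqref{RPF0}--\eqref{Q}. The paper constructs these because they are reused later, in the complex Perron--Frobenius theorem and in Lemma~\ref{LL}, not because this theorem requires them.

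Your Step 3 supplies the part the paper leaves implicit after observing that $u_j$ is a martingale difference. It consists of the bound $\bigl|\sigma_n(x)-(\sum_{j\le n}\text{Var}(u_j))^{1/2}\bigr|\le 2\sup_j\|R_j\|_\infty$, the bounded-or-divergent dichotomy, and the converse. Your reading, ``$\sigma_n(x)\to\infty$ iff no such decomposition exists'', is the one under which the statement is actually true.
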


To state the next results we need additional assumptions.
\begin{assumption}[Regularity of the distributions]\label{Ass2}
For every $\delta>0$, 
$$
\inf_{j\in\mathbb N}\inf_{a\in\text{supp}(A_{j})}\mathbb P(\|A_{j}-a\|\leq\delta)>0
$$
and there exists $k=k_\delta$ such that
$$
\inf_{j\in\mathbb N}\inf_{x,y\in\mathbb P^{d-1}(\mathbb R)}\mathbb P(d(A_{j,k}x,y)\leq\delta)>0.
$$
\end{assumption}
For finite state random matrices the first part of the  assumption holds when $\inf_{k\in\mathbb N}\inf_{a\in\text{supp}(A_k)}\mathbb P(A_k=a)>0$, and it also holds when each $A_k$ has a uniformly bounded away from $0$ density on a sufficiently nice compact subset of the invertible matrices. Of course, we can also have a mixed behavior where some of the matrices take finitely many values and the others have densities, and we can require that the above properties will happen only after a few iterates.

The second part of the assumption means that the action of $(A_{j,k})_k$ is probabilistically  covering/transitive in a  weak sense (as $k$ may depend on $\delta$), and it holds when the supports grow large enough as the number of matrices we multiply (possibly) increases. For instance, it holds if for some projectively transitive subsets of invertible matrices $\Gamma_{j,\delta}$, for all $\delta>0$ we have 
$$
\inf_j\inf_{x\in\mathbb P^{d-1}(\mathbb R)}\inf_{\gamma\in\Gamma_{j,\delta}}\mathbb P(d(A_{j,k_\delta}x,\gamma x)\leq\delta)>0.
$$
Note that when taking $k_\delta=1$ and $\Gamma_{j,\delta}=\Gamma_j$ independent of  $\delta$ then the above condition holds under the first condition of Assumption \ref{Ass2}, assuming that $\Gamma_j\subset\text{supp}(A_j)$.

\begin{assumption}[Size of the supports]\label{Ass3}
One of the following conditions hold:
\vskip0.1cm
(i) The supports of all $A_j$'s are connected and there is $\ell_0\in\mathbb N$ such that the sets 
$$
\Delta_{j,\ell_0}=\left\{|\ln|c|-\ln|d||:\exists x,y\not=0, \|x\|=1 \,\,\exists \alpha,\beta\in\text{supp}(A_{j,\ell_0}),\,\alpha x=cy, \beta x=dy\right\}
$$
satisfy
$$
\inf_j\sup (\Delta_{j,\ell_0})>0.
$$

\vskip0.2cm
(ii) For every $r>1$ there exists $\ell_0\in\mathbb N$ such that the sets
$$
\Gamma_{j,\ell_0}=\left\{\ln|c|-\ln|d|:\exists x,y\not=0, \|x\|=1 \,\,\exists \alpha,\beta\in\text{supp}(A_{j,\ell_0}),\,\alpha x=cy, \beta x=dy\right\}
$$
satisfy
$$
\inf_{j}\inf_{r^{-1}\leq h\leq r}\text{dist}(\Gamma_{j,\ell_0},h\mathbb Z)>0.
$$
\end{assumption}
Both conditions mean that the supports are large enough. Indeed,  if $\text{supp}(A_{j,\ell_0})$ includes a ball of radius $\epsilon$ for some $\epsilon>0$ then we can always take  $\beta=\alpha(1+\epsilon' I)$ for a given $\epsilon'>0$ small enough, $c=1$, $y=\alpha x$ and $d=1+\epsilon$. Using these choices this we see that the set of possible values of $\ln|c|-\ln|d|$ includes an interval which does not depend on $j$, and so condition (i) holds, and the distance from a lattice is uniformly bounded away from $0$, which ensures the validity of condition (ii).

 Note that condition (ii) resembles the assumptions in \cite[page 78]{HH} in the case of iid matrices (as the condition holds when $\Gamma_{j,\ell_0}$ are dense).

Our first result is:
\begin{theorem}[A non-lattice local CLT]\label{LLT}
Under Assumptions \ref{Ass0}, \ref{Ass1}, \ref{Ass2} and \ref{Ass3} and when $\sigma_n(x)\to\infty$,
    for every continuous function with compact support $G:\mathbb R\to\mathbb R$ or an indicator of a bounded interval we have 
$$
\lim_{n\to\infty}\sup_{u\in\mathbb R}\left|\sqrt{2\pi}\sigma_n(x)\mathbb E[G(S_n(x)-u)]-e^{-\frac{(u-\mathbb E[S_n(x)])^2}{2(\sigma_n(x))^2}}\int G(y)dy\right|=0.
$$
 \end{theorem}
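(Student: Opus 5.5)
We follow the standard Fourier-analytic route to a non-lattice local CLT. By the usual approximation argument (as in \cite{DolgHaf LLT}), it suffices to prove the statement for functions $G$ whose Fourier transform $\hat G$ is continuous and compactly supported, say $\text{supp}(\hat G)\subset[-T,T]$. Once this is done, indicators of intervals and compactly supported continuous functions follow by sandwiching between such functions, using Breiman/Stone-type approximations. For such $G$ we write
$$
\mathbb E[G(S_n(x)-u)]=\frac{1}{2\pi}\int_{-T}^{T}\hat G(t)e^{-itu}\mathbb E\left[e^{itS_n(x)}\right]dt .
$$
We then split the integral into three regions: $|t|\leq \delta_0$, $\delta_0\leq |t|\leq T$, and, inside the first region, the further scale $|t|\leq \sigma_n^{-1}(x)\,\text{polylog}$.

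\textbf{Region near the origin.} Here we write the characteristic function as $\mathbb E[e^{itS_n(x)}]=\mathcal L^{it}_{n}\cdots\mathcal L^{it}_1\mathbf 1(x)$. In this expression $\mathcal L_j^{it}g(\bar y)=\mathbb E[e^{it\sigma(A_j,y)}g(A_j\bar y)]$ acts on H\"older functions on the projective space; more generally we use the flag space, as needed for the Markov chain $Y_j$. Assumption \ref{Ass1} yields uniform contraction of $\mathcal L_{j,n_0}^{0}$ in a H\"older norm. We then prove a sequential complex Perron--Frobenius theorem in the spirit of \cite{DolgHaf PTRF 2}: for $|t|\leq\delta_0$ there are analytic sequences $\lambda_j(t)$, $h_j^{(t)}$, $\nu_j^{(t)}$ with
$$
\mathcal L^{it}_{j,n}=\lambda_{j,n}(t)\,\nu_j^{(t)}(\cdot)h_{j+n}^{(t)}+O(e^{-cn}).
$$
Writing $\Pi_n(t)=\ln\lambda_{1,n}(t)$, we show that $\Pi_n'(0)=i\mathbb E S_n(x)+O(1)$ and $\Pi_n''(0)=-\sigma_n^2(x)+O(1)$. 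Moreover $|\Pi_n'''(t)|\leq C n$. We also need the finer bound $\text{Re}\,\Pi_n(t)\leq -c\,t^2\sigma_n^2(x)$ for $|t|\leq\delta_0$, obtained via the variance-block argument of \cite{DolgHaf PTRF 1}: split $[1,n]$ into blocks on which the variance is of order one. On $|t|\leq \delta_0$ this gives the Gaussian approximation $e^{itE S_n-t^2\sigma_n^2/2}$ with error $o(\sigma_n^{-1})$ after integration. Since $\hat G(t)\to\hat G(0)=\int G$ on the scale $t\sim\sigma_n^{-1}$, the main term produces $\frac{1}{\sqrt{2\pi}\sigma_n}e^{-(u-ES_n)^2/2\sigma_n^2}\int G$ uniformly in $u$.

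\textbf{Region away from the origin.} This is the main obstacle. We must show that $\sup_{\delta_0\leq|t|\leq T}\|\mathcal L^{it}_{1,n}\|\leq Ce^{-cn}$, which makes this part $o(\sigma_n^{-1})$ since $\sigma_n^2=O(n)$.

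\emph{Step 1: Lasota--Yorke inequality.} Using Assumptions \ref{Ass0} and \ref{Ass1}, we show that $\|\mathcal L^{it}_{j,m}g\|_{\alpha}\leq C\theta^m\|g\|_\alpha+C(1+|t|)\|g\|_\infty$, uniformly on compact $t$-sets. This reduces the task to showing $\sup_j\|\mathcal L^{it}_{j,m}\|_\infty\leq 1-\eta$ for some fixed $m$.

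\emph{Step 2: reduction to a single point.} Arguing by contradiction, suppose $\|\mathcal L^{it}_{j,m}\|_\infty$ is nearly one at some point. Combining the first part of Assumption \ref{Ass2} (positive mass near every support point) with H\"older regularity, we find that $e^{it\sigma(A_{j,m},y)}g(A_{j,m}\bar y)$ must be nearly constant in phase on most of the support, and by the covering part of Assumption \ref{Ass2} this holds for nearly all $\bar y$.

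\emph{Step 3: contradiction via Assumption \ref{Ass3}.} Take two support elements $\alpha,\beta$ with $\alpha x=cy$ and $\beta x=dy$. They send $x$ to the same projective point, so the $g$-factors coincide and only the scale differs. Near-constancy then forces $t(\ln|c|-\ln|d|)$ to be close to $2\pi\mathbb Z$. Under (ii) this contradicts the lattice-distance condition for $t$ in the compact range, after rescaling $h=2\pi/t$. Under (i), connectedness of the supports lets the difference $\ln|c|-\ln|d|$ vary continuously over an interval of positive length, which is impossible to keep within a lattice.

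The delicate part is making Step 2 quantitative and uniform in $j$. Here $\sigma$ depends on $y$ rather than only on $A\bar y$, so one cannot use projective irreducibility. We would handle this by passing to the two-point action given by an independent copy of $(A_j)$, as outlined in the introduction.
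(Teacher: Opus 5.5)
Your treatment of the region $\delta_0\le|t|\le T$ follows the paper's route. That route is: the Lasota--Yorke inequality; the covering part of Assumption \ref{Ass2} used contrapositively to force $|h|$ close to $1$ everywhere; the two-copy $\sin^2$ identity; localization at arbitrary support points via the first part of Assumption \ref{Ass2}; and projective invariance of the phase $\phi$ to isolate $t(\ln|c|-\ln|d|)$.

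The genuine gap is near the origin. The facts you list are $\Pi_n'(0)=i\mathbb E S_n(x)+O(1)$, $\Pi_n''(0)=-\sigma_n^2(x)+O(1)$, $|\Pi_n'''(t)|\le Cn$ and $\text{Re}\,\Pi_n(t)\le -ct^2\sigma_n^2(x)$. These do not give the Gaussian approximation with error $o(\sigma_n^{-1})$. The Gaussian upper bound disposes of $K/\sigma_n\le|t|\le\delta_0$. You then still need $e^{-it\mathbb E S_n}\mathbb E[e^{itS_n(x)}]-e^{-t^2\sigma_n^2/2}\to0$ uniformly on $|t|\le K/\sigma_n$. Your Taylor remainder there is $O(nK^3\sigma_n^{-3})$, which tends to $0$ only if $\sigma_n^3\gg n$. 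The theorem assumes only $\sigma_n\to\infty$, so e.g. $\sigma_n^2\asymp\ln n$ is allowed, and slow variance growth is precisely the regime the paper stresses. Indeed $\Pi_n(t)=it\mu_n-\frac12t^2\sigma_n^2+int^3$ has every property you list, yet it is not Gaussian on the scale $t\sim\sigma_n^{-1}$ when $\sigma_n^3=o(n)$.

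The missing ingredient is that the higher derivatives of $\Lambda_{n,x}(t)=\ln\mathbb E[e^{itS_n(x)}]$ (equivalently of $\Pi_n$) are $O(\sigma_n^2(x))$, not $O(n)$, uniformly on $|t|\le r_0$. This is \eqref{cum}, together with its third-order analogue, and it turns the remainder into $O(K^3/\sigma_n)$. The paper obtains it from Lemma \ref{LL}: the $L^4$ norm of every centered block sum $\sum_{k=j}^{j+n-1}(\sigma(Y_k)-\mathbb E\sigma(Y_k))$ is at most $C(1+\text{its }L^2\text{ norm})$, uniformly in $j,n$. That lemma is proved via the martingale--coboundary decomposition \eqref{MartCob}, Burkholder's inequality and exponential decorrelation of $u_\ell^2$. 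It is then fed into the block argument of \cite{DolgHaf PTRF 1}: cut $[1,n]$ into blocks of variance of order one, each contributing $O(1)$ to every derivative. That yields both \eqref{small t} and \eqref{cum}. Your ``variance-block argument'' is the right tool, but you must apply it to the higher derivatives and not only to the real part, and it needs this moment comparison as input. Alternatively, import the CLT for $\bar S_n(x)/\sigma_n(x)$, e.g. from the Berry--Esseen bound of \cite{MatBE}, and combine it with the Gaussian upper bound by dominated convergence.

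A smaller point concerns Step 3 under (i). Connectedness of the supports does not by itself make the achievable values of $\ln|c|-\ln|d|$ fill an interval, because the set of $(\alpha,\beta,x)$ with $\alpha x\parallel\beta x$ need not be connected. Argue as the paper does. The continuous phase $t(\ln\|\alpha x\|-\ln\|\beta x\|)+\phi(\alpha x)-\phi(\beta x)$ on the connected set $\text{supp}(A_{j,\ell})^2\times S^{d-1}$ stays uniformly close to $2\pi\mathbb Z$ and vanishes on the diagonal, so it stays close to $0$. Evaluating it at $\alpha x=cy$, $\beta x=dy$ then contradicts $\inf_j\sup\Delta_{j,\ell_0}>0$.
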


 Our second result is:
\begin{theorem}[First order Edgeworth expansions]\label{Edge}
  Under Assumptions \ref{Ass0}, \ref{Ass1}, \ref{Ass2} and \ref{Ass3} and when $\sigma_n(x)\to\infty$, with $\bar S_n(x)=S_n(x)-\mathbb E[S_n(x)]$,
$$
\sup_{t\in\mathbb R}\left|\mathbb P\left(\bar S_n(x)\leq t\sigma_n(x)\right)-\Phi(t)-\kappa_n(x)(t^3-3t)(\sigma_n(x))^{-3}\varphi(t)\right|
=o((\sigma_n(x))^{-1})
$$
where $\Phi(t)=\frac{1}{\sqrt{2\pi}}\int_{-\infty}^t e^{-\frac12 y^2}dy$, $\varphi(t)=\frac{1}{\sqrt{2\pi}}e^{-t^2/2}$ and $\kappa_n(x)=\mathbb E[(\bar S_n(x))^3]$. Moreover, $\kappa_n(x)=O((\sigma_n(x))^2)$.
\end{theorem}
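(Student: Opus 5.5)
We prove Theorem \ref{Edge} through Esseen's smoothing inequality. With $\sigma_n=\sigma_n(x)$, $\bar S_n=\bar S_n(x)$ and $\kappa_n=\kappa_n(x)$, set
$$
F_n(t)=\mathbb P\left(\bar S_n\leq t\sigma_n\right),\qquad E_n(t)=\Phi(t)+\kappa_n(t^3-3t)\sigma_n^{-3}\varphi(t).
$$
The inequality gives, for every $T>0$,
$$
\sup_t|F_n(t)-E_n(t)|\leq C\int_{-T}^{T}\left|\frac{\hat F_n(s)-\hat E_n(s)}{s}\right|ds+\frac{C(1+|\kappa_n|\sigma_n^{-3})}{T}.
$$
We take $T=M\sigma_n$ with $M$ large but fixed. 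The last term is then $O(1/(M\sigma_n))$, because $\kappa_n=O(\sigma_n^2)$; we prove that bound separately below. The integral is split at $|s|=\eta\sigma_n$, with $\eta>0$ small, and at the end we let $M\to\infty$.

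\emph{Region $|s|\leq\eta\sigma_n$.} Here we use the complex sequential Perron--Frobenius theorem for the transfer operators
$$
\mathcal L_{j,t}g(\bar y)=\mathbb E\left[e^{it\sigma(A_j,y)}g(A_j\bar y)\right],
$$
which the introduction says is derived from Assumption \ref{Ass1} together with the ideas of \cite{DolgHaf PTRF 2}. For $|t|\leq\eta$ this gives leading eigenvalues $\lambda_{j}(t)$ and a representation
$$
\mathbb E\left[e^{itS_n(x)}\right]=\prod_{j\leq n}\lambda_j(t)\,(1+O(|t|))+O(e^{-cn}).
$$
Write $\Lambda_n(t)=\sum_{j\le n}\ln\lambda_j(t)$. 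Its derivatives satisfy $\Lambda_n'(0)=i\mathbb E S_n+O(1)$, $\Lambda_n''(0)=-\sigma_n^2+O(1)$, $\Lambda_n'''(0)=-i\kappa_n+O(1)$, and $|\Lambda_n''''(t)|\leq Cn$. The crude bound $Cn$ on the fourth derivative is not good enough, because $\sigma_n$ may grow much more slowly than $n$. We follow \cite{DolgHaf PTRF 1} instead. By Theorem \ref{VAR} and the martingale--coboundary structure, blocks on which the variance is small contribute negligibly. This yields the bounds
$$
|\Lambda_n^{(k)}(t)|\leq C_k\sigma_n^2\quad(k=3,4),
$$
valid for $|t|\leq\eta$. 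The case $k=3$ at $t=0$ is exactly the claim $\kappa_n=O(\sigma_n^2)$. Taylor expanding $\Lambda_n(s/\sigma_n)$ to third order then gives
$$
\hat F_n(s)=e^{-s^2/2}\left(1-\frac{i\kappa_n s^3}{6\sigma_n^3}\right)+O\left(\frac{s^4+s^6}{\sigma_n^2}\right)e^{-s^2/4}.
$$
Integrating against $1/s$, this region contributes $O(\sigma_n^{-2})$.

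\emph{Region $\eta\sigma_n\leq|s|\leq M\sigma_n$.} Here $t=s/\sigma_n$ lies in the compact set $K=[\eta,M]$. The key estimate is uniform exponential decay:
$$
\sup_{t\in K}\left|\mathbb E\left[e^{itS_n(x)}\right]\right|\leq Ce^{-cn}.
$$
Given that, the region contributes at most $C\ln(M/\eta)e^{-cn}=o(\sigma_n^{-1})$, since $\sigma_n^2\le Cn$. The contribution of $\hat E_n$ on this region is $O(e^{-c\sigma_n^2})$. Letting $M\to\infty$ then yields the $o(\sigma_n^{-1})$ bound.

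To prove the decay we first establish a Lasota--Yorke inequality for $\mathcal L_{j,t}$ acting on Hölder functions on $\mathbb P^{d-1}(\mathbb R)$, using Assumptions \ref{Ass0} and \ref{Ass1}. Next, using the covering part of Assumption \ref{Ass2}, we bound the operator norm of a block $\mathcal L_{j,t}^{(\ell)}$ by its sup norm, and the sup norm by its values at single points. It then suffices to rule out $\|\mathcal L_{j,t}^{(\ell)}\|_\infty$ being close to $1$, uniformly in $j$ and $t\in K$. Suppose it were close to $1$. The regularity in the first part of Assumption \ref{Ass2} forces $e^{it\sigma(\alpha,y)}$ and $e^{it\sigma(\beta,y)}$ to nearly align with a common phase factor, for all $\alpha,\beta$ in the support of $A_{j,\ell_0}$ that map $y$ to the same direction. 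That alignment means $t(\ln|c|-\ln|d|)$ lies near $2\pi\mathbb Z$. Under Assumption \ref{Ass3}(ii) this contradicts the hypothesis with $h=2\pi/t$. Under Assumption \ref{Ass3}(i), connectedness of the supports lets us move continuously between $\alpha$ and $\beta$, so the values near $2\pi\mathbb Z$ must in fact be near $0$, contradicting $\inf_j\sup\Delta_{j,\ell_0}>0$.

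The main obstacle is this last step. One must handle the fact that $\sigma(A,y)$ is not a function of $\overline{Ay}$, and turn ``norm close to $1$'' into a quantitative near-lattice relation that is uniform in $j$. We would attempt it through the two-point (independent copy) action, as the introduction suggests.
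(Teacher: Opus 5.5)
Your architecture is the paper's. You reduce via Esseen's inequality to \eqref{small t}, \eqref{large t} and a fourth-derivative bound \eqref{cum}, and use the sequential complex Perron--Frobenius theorem near $t=0$. Away from $0$ you get exponential decay from the Lasota--Yorke inequality, the covering part of Assumption \ref{Ass2}, the two-copy $\sin^2$ identity, and a near-lattice contradiction with Assumption \ref{Ass3}.

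The substantive gap is your justification of $|\Lambda_n^{(k)}(t)|\leq C_k\sigma_n^2$. ``Blocks on which the variance is small contribute negligibly'' is not the mechanism, and Theorem \ref{VAR} alone does not supply it. What is needed is that a block sum whose variance is of order one has fourth moment of order one, \emph{regardless of the length of the block}. One then cuts $[1,n]$ into $O(\sigma_n^2)$ such blocks, each contributing $O(1)$. This is Lemma \ref{LL}. By \eqref{MartCob} it reduces to the martingale part, and Burkholder's inequality bounds $\|\sum u_k\|_{L^4}$ by $\|\sum u_k^2\|_{L^2}^{1/2}$. The decorrelation estimate \eqref{above} for $u_k^2-\mathbb E[u_k^2]$, derived from \eqref{Q} and \eqref{Rrr}, then bounds this by the variance. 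Without that decorrelation step, a bounded-variance block of length $L$ is only seen to have fourth moment $O(L)$, which is exactly the crude bound you rightly wanted to avoid.

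There are three smaller corrections. First, Taylor-expand $\Lambda_{n,x}(t)=\ln\mathbb E[e^{itS_n(x)}]$ itself, whose first three derivatives at $0$ are exactly $i\mathbb E[S_n(x)]$, $-\sigma_n^2$ and $-i\kappa_n$. If you keep the factor $1+O(|t|)$ and the $O(1)$ error in $\Lambda_n'(0)$ separate, $\hat F_n(s)$ acquires an $O(|s|/\sigma_n)$ term, which integrates to only $O(\sigma_n^{-1})$.

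Second, $e^{-s^2/2}\bigl(1-\frac{i\kappa_n s^3}{6\sigma_n^3}\bigr)$ is the Fourier--Stieltjes transform of $\Phi(t)-\frac{\kappa_n}{6\sigma_n^3}(t^2-1)\varphi(t)$. The $E_n$ you defined, with $t^3-3t$, has transform $e^{-s^2/2}\bigl(1-\kappa_n\sigma_n^{-3}s^4\bigr)$. With that $E_n$ the small-$|s|$ integral is of order $|\kappa_n|\sigma_n^{-3}$, not $o(\sigma_n^{-1})$. So your argument proves the standard first-order expansion, and the correction term in the statement has to be read in that form.

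Third, the step you flag as the main obstacle is carried out in the paper exactly as you outline, with two additions. First show $\min|h|>\frac12$ using the covering part of Assumption \ref{Ass2} with $\ell>k_\delta$, so the weights in the $\sin^2$ identity stay bounded below. Then use $\Delta_{j,\ell_0}\subset\Delta_{j,\ell}$ for $\ell\geq\ell_0$, obtained by multiplying $\alpha,\beta$ on the left by a support point of $A_{j+\ell_0,\ell-\ell_0}$. This is needed because the contradiction is run at a block length $\ell$ larger than $\ell_0$.
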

We note that under Assumptions \ref{Ass0} and \ref{Ass1} we get from \cite{MatBE} the optimal CLT rates 
$$
\sup_{t\in\mathbb R}\left|\mathbb P\left(S_n(x)-\mathbb E[S_n(x)]\leq t\sigma_n(x)\right)-\Phi(t)\right|=O((\sigma_n(x))^{-1}).
$$
In fact, as pointed our in Remark \ref{BErem}, the methods of this paper provide an alternative proof of the above rates.
Under our additional assumptions Theorem \ref{Edge} shows that  we have a first order correction term in the above rates.

 Finally, in the connected case and under some assumptions on the growth rate of $k_\delta$ we are able to prove Edgeworth expansions of all orders.
 \begin{theorem}\label{EdgHigh}
 Suppose that the supports of $A_j$ are connected and that $\lim_{n\to\infty}\sigma_n(x)=\infty$. Let 
 Assumptions \ref{Ass0}, \ref{Ass1}, \ref{Ass2} and \ref{Ass3} be in force. Assume that the number $k_\delta$ from Assumption \ref{Ass2} satisfies $k_\delta=O(\delta^{-q})$ for some sufficiently small $q>0$. Then  there are polynomials $P_{j,n,x}$ whose degrees depend only on $j$ and their coefficients are uniformly bounded in $x$ and $n$ such that for all $r\in\mathbb N$, with 
$\bar S_n(x)=S_n(x)-\mathbb E[S_n(x)]$,
 $$
\sup_{t\in\mathbb R}\left|\mathbb P(\bar S_n(x)\leq t\sigma_n(x))-\Phi(t)-\sum_{j=1}^{r}(\sigma_n(x))^{-j}P_{j,n,x}(t)\varphi(t)\right|=o((\sigma_n(x))^{-r}).
 $$
 \end{theorem}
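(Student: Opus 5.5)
The plan is to follow the standard Fourier route for Edgeworth expansions of order $r$. We use Esseen's smoothing inequality with a cutoff $T_n=c(\sigma_n(x))^{r}$ (more precisely $\sigma_n^{r}\,\omega_n$ with $\omega_n\to\infty$ slowly). This reduces the problem to
$$
\int_{|t|\leq T_n}\frac{|\bbE[e^{it\bar S_n(x)/\sigma_n(x)}]-\widehat{\Psi}_{r,n}(t)|}{|t|}\,dt=o((\sigma_n(x))^{-r}),
$$
where $\Psi_{r,n}$ is the candidate expansion $\Phi+\sum_{j\le r}\sigma_n^{-j}P_{j,n,x}\varphi$. We split the integral into three ranges in the unnormalized variable $s=t/\sigma_n$:
\begin{itemize}
\item $|s|\le\epsilon_0$, a small neighborhood of the origin;
\item $\epsilon_0\le|s|\le K$, a compact region away from the origin;
\item $K\le|s|\le \omega_n\sigma_n^{r-1}$, a long, time-dependent region.
\end{itemize}

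\textbf{Near the origin.} Here we write the characteristic function as a product of the complex Markov operators $\cL_{j,s}$ (the perturbations $\cL_{j,s}g(y)=\bbE[e^{is\sigma(A_j,y)}g(A_jy)]$, extended to the appropriate two-point or flag space as in the proof of Theorem \ref{VAR}). The sequential complex Perron--Frobenius theorem developed for Theorem \ref{Edge} gives a representation $\bbE[e^{isS_n(x)}]=\prod_{j\le n}\lambda_j(s)\,(1+O(e^{-cn}))$ with analytic $\lambda_j$. Taking logarithms, $\Lambda_n(s)=\sum_j\ln\lambda_j(s)$ has derivatives at $0$ of every order $k\geq 2$ bounded by $C_k(\sigma_n^2+1)$; the same technique as in \cite{DolgHaf PTRF 1, DolgHaf PTRF 2} gives this, using the martingale–coboundary structure of Theorem \ref{VAR} to compare with $\sigma_n^2$. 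A Taylor expansion of $\Lambda_n(t/\sigma_n)$ to order $r+2$ then yields the polynomials $P_{j,n,x}$ in the usual way: they are the Hermite-type combinations of the normalized cumulants $\Lambda_n^{(k)}(0)/\sigma_n^{k}$, which are $O(\sigma_n^{2-k})$. This gives bounded coefficients uniformly in $x,n$, with degrees depending only on $j$, and the error on this range is $o(\sigma_n^{-r})$.

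\textbf{Compact region away from the origin.} Here we reuse the exponential bound from the proof of Theorems \ref{LLT}–\ref{Edge}, namely $|\bbE[e^{isS_n(x)}]|\le Ce^{-cn}$ uniformly for $\epsilon_0\le|s|\le K$. Since Assumption \ref{Ass0} gives $\sigma_n^2=O(n)$, this is $O(e^{-c'\sigma_n^2})$ and hence negligible.

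\textbf{The long region (main obstacle).} We need $|\bbE[e^{isS_n}]|=o(\sigma_n^{-r-1})$, say $\le Ce^{-c n^{\gamma}}$, for $K\le|s|\le C\sigma_n^{r}\le Cn^{r/2}$. For this we introduce, for each $s$, an adapted Hölder-type norm on the flag space with exponent scaled like $\|g\|_\infty+|s|^{-1}\mathrm{Lip}(g)$. In this norm the Lasota–Yorke inequality for blocks of $\cL_{j,s}$ holds with constants independent of $s$ once the block length is of order $\ln|s|$, using the contraction from Assumption \ref{Ass1}.

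We then show that after $k(s)$ further iterates the norm of the block is at most $1-\eta(s)$. The mechanism is as follows. By connectedness of the supports and Assumption \ref{Ass3}(i), the phase $s\sigma(\alpha,y)$ varies over an interval of length $\gtrsim|s|\sup\Delta_{j,\ell_0}$, and this variation persists within $\delta\sim|s|^{-1}$-neighborhoods. The second part of Assumption \ref{Ass2}, applied with $\delta=c/|s|$, makes the chain visit such neighborhoods with positive probability within $k_\delta=O(|s|^{q})$ steps. The first part of Assumption \ref{Ass2} gives mass bounded below on those pieces, so we lose a factor $1-\eta$ every $k(s)=O(|s|^{q}+\ln|s|)$ steps, with $\eta$ bounded below possibly by a power of $|s|^{-1}$. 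Consequently,
$$
|\bbE[e^{isS_n}]|\le C\exp\!\bigl(-c\,n\,|s|^{-q'}\bigr).
$$
With $|s|\le n^{r/2}$ and $q'$ small (this is where $k_\delta=O(\delta^{-q})$ with small $q$ is used), this is at most $e^{-cn^{1-rq'/2}}$. Here $r$ is fixed and $q$ is chosen depending on $r$; equivalently, the statement for every $r$ needs $q$ small relative to $r$. This is superpolynomially small in $\sigma_n$.

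The delicate point is quantifying $\eta(s)$ polynomially in $|s|^{-1}$, and making the "non-projectivity of $\sigma(A,y)$" argument quantitative at scale $1/|s|$. I expect this step to dominate the work.
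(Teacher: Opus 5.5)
Your architecture matches the paper's: Esseen smoothing, cumulant bounds from the sequential complex Perron--Frobenius theorem near $0$, Proposition \ref{PPP} on compact sets away from $0$, and, on the long range, an adapted norm $\|h\|_t=\max(\|h\|_\infty, v_\epsilon(h)/(C|t|))$ with block lengths governed by $k_\delta$. Your remark that $q$ must be small depending on $r$ is correct. It matches the paper's requirement $k_1(t)=o(n^w)$, $w<1$, on $|t|\le an^{(r-1)/2}$.

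Two smaller points:
\begin{itemize}
\item Near $0$ you need $\sup_{|s|\le r_m}|\Lambda_{n,x}^{(m)}(s)|=O(\sigma_n^2)$ on a whole neighborhood, as in \eqref{cum1}, not only at $s=0$. The $O(n)$ bound that $\sum_j\ln\lambda_j$ gives for free is insufficient for the Taylor remainder when $\sigma_n$ grows slowly, which the theorem allows.
\item The relevant scale is $\delta\sim|s|^{-1/\epsilon}$, not $|s|^{-1}$. Lemma \ref{LLL} only contracts $\mathbb E[d^\epsilon]$ for small $\epsilon$, and a Lasota--Yorke inequality for a Lipschitz norm does not follow from Assumption \ref{Ass1}. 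In the scaled H\"older norm, Lemma \ref{ll1} already gives $s$-independent constants, so blocks of length $\ln|s|$ are not needed.
\end{itemize}

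The genuine gap is the long-range decay, which is the only new ingredient of Theorem \ref{EdgHigh}. You get a gain $1-\eta(s)$ per block by forcing the chain into a $\delta$-neighborhood. You then claim $|\mathbb E[e^{isS_n(x)}]|\le C\exp(-cn|s|^{-q'})$ with $q'$ small ``because $q$ is small''. But $q$ controls only the block length. The gain is at most the probability of landing in a $\delta$-ball. Since $\mathbb P^{d-1}(\mathbb R)$ contains $\asymp\delta^{-(d-1)}$ disjoint $\delta$-balls, $\inf_{x,y}\mathbb P(d(A_{j,k}x,y)\le\delta)\le C\delta^{d-1}$ for every $k$. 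So along your route $\eta(s)\lesssim|s|^{-(d-1)/\epsilon}$ (or $|s|^{-(d-1)}$ at your Lipschitz scale). Hence $q'\ge d-1\ge 1$, and $n|s|^{-q'}$ stays bounded at $|s|\asymp n^{(r-1)/2}$ as soon as $r\ge 3$, whatever $q$ is.

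Separately, the cancellation itself is not established. That ``the phase $s\sigma(\alpha,y)$ varies over an interval'' does not exclude that the phase of $h$ compensates it, since in the adapted norm $h$ may oscillate at frequency $|s|$.

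In the paper this is handled as in Lemma \ref{Ll}:
\begin{itemize}
\item Near-maximality of $|\mathcal L^\ell_{j,it}h|$ forces $\min|h|>1/2$.
\item Writing $h=|h|e^{i\phi}$, it forces $t(\ln\|\alpha x\|-\ln\|\beta x\|)+\phi(\alpha x)-\phi(\beta x)$ to be close to $2\pi\mathbb Z$ for all $\alpha,\beta$ in the support.
\item Connectedness, together with vanishing at $\alpha=\beta$, makes this quantity close to $0$.
\item Choosing $\alpha x=cy$, $\beta x=dy$ removes the projective $\phi$-terms, contradicting Assumption \ref{Ass3}(i).
\end{itemize}
The paper then asserts $\|\mathcal L^{k_1(t)}_{j,it}\|_t\le 1-c$ with $c$ independent of $t$. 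This gives $\|\mathcal L^n_{0,it}\|_t\le e^{-\lambda n/k_1(t)}$, so only $k_\delta=O(\delta^{-q})$ enters.

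You are right that this uniformity is the crux. Be aware that the paper only sketches it. Its Lemma \ref{ll2}(b) yields a gain proportional to the same $\eta(\delta)$ at $\delta\sim|t|^{-1/\epsilon}$, so a $t$-independent gain needs a cancellation estimate that does not go through hitting a single small ball. Your proposal supplies neither such an estimate nor a substitute quantitative hypothesis on the probabilities in Assumption \ref{Ass2}.
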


\subsection{Two approaches for the verification of Assumption \ref{Ass1}}
The following section appeared in \cite{MatBE}, and is included here for the sake of completion.
 
The first approach is a perturbative approach. For two invertible matrices $A$ and $B$ set 
$$
\tilde c(A,B)=(\|A\|+\|B\|)\|A-B\|\left(\|A^{-1}\|^2+\|B\|^2\|A^{-1}\|^2\|B^{-1}\|^2\right).
$$
\begin{proposition}\label{Corr1}
Let $(A_j)$ and $(B_j)$ be two independent sequences of random invertible matrices such that Assumption \ref{Ass1} holds for the sequence $(B_j)$. Suppose we can couple $(B_j)$ and $(A_j)$ such that  $\te:=\sup_j\bbE[\tilde c(A_{j,n_0},B_{j,n_0})]<\infty$. Then there exists a constant $\varepsilon_0=\varepsilon_0(\delta,n_0)$ such that  Assumption \ref{Ass1} holds for the sequence $(A_j)$ with some $\delta_1>0$ instead of $\delta$ and the same $n_0$ if $\te<\varepsilon_0$.

In particular, suppose that
 $C_1=\sup_j\|1+N(B_j)\|_{L^{8n_0}}<\infty$ and $C_2=\sup_j\|1+N(B_j)\|_{L^{8n_0}}<\infty$. Assume also
that we can couple $(A_j)$ and $(B_j)$ such that $\sup_j\|A_j-B_j\|_{L^{8n_0}}\leq \varepsilon$. Then there exists a constant $\varepsilon_0=\varepsilon_0(\delta,n_0,C_1,C_2)$ such that  Assumption \ref{Ass1} holds for the sequence $(A_j)$ with some $\delta_1>0$ instead of $\delta$ and the same $n_0$ if $\varepsilon<\varepsilon_0$.  
\end{proposition}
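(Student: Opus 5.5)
The plan is a perturbation estimate for the quantity in Assumption \ref{Ass1}. For invertible $A$ and $\bar x\neq\bar y$ write
$$
L_A(\bar x,\bar y)=\ln\frac{d(A\bar x,A\bar y)}{d(\bar x,\bar y)}.
$$
The core is a deterministic Lipschitz-type bound of the form
$$
|L_A(\bar x,\bar y)-L_B(\bar x,\bar y)|\le C\,\tilde c(A,B),
$$
uniform in $\bar x\neq\bar y$, with an absolute constant $C$ (or possibly one depending only on $d$). Granting this, we fix $j$ and $\bar x\neq\bar y$ and apply the bound with $A=A_{j,n_0}$, $B=B_{j,n_0}$ under the coupling. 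Taking expectations and using that $(B_j)$ satisfies Assumption \ref{Ass1} gives
$$
\bbE[L_{A_{j,n_0}}(\bar x,\bar y)]\le \bbE[L_{B_{j,n_0}}(\bar x,\bar y)]+C\te\le-\delta+C\te .
$$
So the choice $\varepsilon_0=\delta/(2C)$ yields $\delta_1=\delta/2$ with the same $n_0$. The joint law of the coupling is irrelevant here, since only the marginal expectations enter.

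For the deterministic bound we represent $d(\bar x,\bar y)=\|x\wedge y\|/(\|x\|\|y\|)$, i.e.\ the sine of the angle. Then
$$
L_A(\bar x,\bar y)=\ln\|\wedge^2A(x\wedge y)\|-\ln\|Ax\|-\ln\|Ay\|+\ln\|x\wedge y\|-\ln\|x\|-\ln\|y\|.
$$
In the difference $L_A-L_B$ the $A$-independent terms cancel. We are left with three differences of logarithms of norms, each controlled by $|\ln a-\ln b|\le |a-b|/\min(a,b)$:
\begin{itemize}
\item[(a)] For $|\ln\|Ax\|-\ln\|Bx\||$ we use $\|Ax-Bx\|\le\|A-B\|\|x\|$ and $\|Ax\|\ge\|A^{-1}\|^{-1}\|x\|$. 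This gives a bound by $\|A-B\|\|A^{-1}\|$, and the same holds with $y$ in place of $x$.
\item[(b)] For the $\wedge^2$ term we use $\wedge^2A-\wedge^2B=\wedge^2$-type cross terms $(A-B)\wedge A+B\wedge(A-B)$. This gives $\|\wedge^2A-\wedge^2B\|\le(\|A\|+\|B\|)\|A-B\|$, together with $\|\wedge^2A\,w\|\ge\|A^{-1}\|^{-2}\|w\|$.
\end{itemize}
Together these give a bound by $(\|A\|+\|B\|)\|A-B\|\|A^{-1}\|^2$ plus $2\|A-B\|\|A^{-1}\|$.

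The remaining issue is that the minima in the denominators must be taken over both $A$ and $B$, which is what produces the second summand $\|B\|^2\|A^{-1}\|^2\|B^{-1}\|^2$. One first bounds with $\min(\|\wedge^2A w\|,\|\wedge^2B w\|)$ and uses $\|B^{-1}\|^{-2}\ge \|B\|^{-2}\|A^{-1}\|^{-2}\|B^{-1}\|^{-2}$-type manipulations to absorb it. Since $\|A\|\|A^{-1}\|\ge1$, the terms of type (a) are dominated by $\tilde c$ up to a constant. This bookkeeping is the step I expect to be most fiddly. Its goal is only a bound by a constant times $\tilde c$, not the sharp form, and any constant suffices.

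For the second part we bound $\te$ by Hölder's inequality. Telescoping gives
$$
A_{j,n_0}-B_{j,n_0}=\sum_{k}A_{j+n_0-1}\cdots A_{j+k+1}(A_{j+k}-B_{j+k})B_{j+k-1}\cdots B_j .
$$
Each term is a product of at most $n_0$ factors of norms bounded by $N(\cdot)$, plus one factor $\|A_i-B_i\|$. The norms $\|A_{j,n_0}^{\pm1}\|$ and $\|B_{j,n_0}^{\pm1}\|$ are bounded by products of $N(A_i)$ and $N(B_i)$ respectively. Hence $\tilde c(A_{j,n_0},B_{j,n_0})$ is a sum of at most a number of terms depending on $n_0$, each a product of at most about $8n_0$ factors from $\{1+N(A_i),1+N(B_i)\}$ times one $\|A_i-B_i\|$. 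Generalized Hölder with exponents $8n_0$ then bounds $\te\le K(n_0)\,(C_1+C_2)^{8n_0}\varepsilon$. Here the hypothesis on $C_2$ is read as concerning $N(A_j)$, and $N(A_i)$ can be controlled either directly from it or via $N(A_i)\le N(B_i)+\|A_i-B_i\|$-type bounds together with the coupling. Choosing $\varepsilon$ small then reduces the second part to the first.
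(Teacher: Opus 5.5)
Your architecture is sound: a pointwise bound of $L_A-L_B$ by a multiple of $\tilde c(A,B)$, then expectation (only the marginals of the coupling enter), and for the second part telescoping plus generalized H\"older with exactly $8n_0$ factors. There, reading $C_2$ as the constant for $N(A_j)$ is the right repair of the typo; the alternative $N(A_i)\le N(B_i)+\|A_i-B_i\|$ does not control $\|A_i^{-1}\|$. The gap is in the deterministic bound, at the step you flagged. It is not just bookkeeping: the three logarithmic differences cannot be bounded separately by $C\tilde c(A,B)$. Take $A=I$, $B=\epsilon I$. Then $\tilde c(A,B)=2(1-\epsilon^2)\le 2$, but the three differences are $2|\ln\epsilon|$, $|\ln\epsilon|$, $|\ln\epsilon|$, and they cancel only in the combination ($L_A=L_B=0$). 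So your item (a) is false as stated, because the minimum sits at $B$. Your absorbing inequality $\|B^{-1}\|^{-2}\ge\|B\|^{-2}\|A^{-1}\|^{-2}\|B^{-1}\|^{-2}$ is equivalent to $\|B\|\|A^{-1}\|\ge 1$, which fails here, and the factor $\max(\|A^{-1}\|,\|B^{-1}\|)^2$ produced by the minima is of size $\epsilon^{-2}$. Since you integrate a pointwise bound, this matters. If $(A,B)=(I,\epsilon I)$ with small probability $p$ and $A=B$ otherwise, then $\bbE[\tilde c(A,B)]\le 2p$, while the expectation of your term-by-term bound is of order $p\epsilon^{-2}$.

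The missing idea is to keep the three terms together and to use that only an upper bound is needed. For unit $x,y$, put $w=x\wedge y$ and $f_M=\|\wedge^2Mw\|/(\|Mx\|\|My\|)$. Then $L_A-L_B=\ln(f_A/f_B)\le f_A/f_B-1$, and
\[
\frac{f_A}{f_B}-1=\frac{(\|\wedge^2Aw\|-\|\wedge^2Bw\|)\|Bx\|\|By\|+\|\wedge^2Bw\|\bigl(\|Bx\|\|By\|-\|Ax\|\|Ay\|\bigr)}{\|\wedge^2Bw\|\,\|Ax\|\|Ay\|}.
\]
Bound the numerator by $2(\|A\|+\|B\|)\|A-B\|\|B\|^2\|w\|$ and the denominator from below by $\|B^{-1}\|^{-2}\|A^{-1}\|^{-2}\|w\|$. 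This gives $L_A-L_B\le 2(\|A\|+\|B\|)\|A-B\|\,\|B\|^2\|A^{-1}\|^2\|B^{-1}\|^2\le 2\tilde c(A,B)$, which is exactly the shape of the second summand of $\tilde c$. It is all the first part needs, for instance with $\varepsilon_0=\delta/4$ and $\delta_1=\delta/2$. If you want a two-sided bound instead, split cases. If $\|A-B\|\|A^{-1}\|\le\frac12$, then $\|B^{-1}\|\le 2\|A^{-1}\|$, and your term-by-term estimates are bounded by a constant times $(\|A\|+\|B\|)\|A-B\|\|A^{-1}\|^2\le\tilde c$. Otherwise $\tilde c\ge\frac12\|A\|\|A^{-1}\|(\|B\|\|B^{-1}\|)^2$, which dominates, up to a constant, the crude bound $|L_A|+|L_B|\le 2\ln(\|A\|\|A^{-1}\|\|B\|\|B^{-1}\|)$. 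With either replacement, the rest of your argument goes through.
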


\begin{remark}
  Assumption \ref{Ass1} holds for iid irreducible proximal matrices (see \cite[Proposition 6.4]{BougLac}). Therefore, we can consider small perturbation of iid matrices. 
\end{remark}

The second approach relies on properties of random singular value decompositions (SVD). Let us fix some $n_0\in\bbN$. Let us consider a random SVD 
$$
A_{j,n_0}=U_{j,n_0}\text{Diag}(\sigma_{1,j,n_0},\sigma_{2,j,n_0},...,\sigma_{d,j,n_0})V_{j,n_0}.
$$
where $\sigma_{1,j,n_0}\geq \sigma_{2,j,n_0}\geq\ldots\geq \sigma_{d,j,n_0}$,
and let $u_{1,j,n_0}$ be the first column of $U_{j,n_0}$. 

\begin{proposition}\label{NewProp}
Suppose that there are constants $C,\alpha,n_0>0$ such that for every $\epsilon\in(0,1)$,
\begin{equation}\label{u condd1}
 \sup_j\sup_{\|x\|=1}\bbP(|<u_{1,j,n_0},x>|\leq \epsilon)\leq C(|\ln(\epsilon)|)^{-1-\alpha}.   
\end{equation}
Assume also that there is a constant $\delta>0$ such that for all $j$, 
$$
\bbE[\ln(\sig_{1,j,n_0}/\sig_{2,j,n_0})]\geq 1+C/\alpha+\delta.
$$
Then Assumption \ref{Ass1} is in force with the above $
n_0$ and $\delta$.
\end{proposition}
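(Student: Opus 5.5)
We need to prove Proposition \ref{NewProp}: under the bound \eqref{u condd1} and the gap condition on singular values, $\bbE[\ln(d(A_{j,n_0}\bar x,A_{j,n_0}\bar y)/d(\bar x,\bar y))]\le-\delta$ uniformly in $j$ and $\bar x\neq\bar y$.

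I will use the standard metric $d(\bar x,\bar y)=\|x\wedge y\|/(\|x\|\|y\|)$, i.e. the sine of the angle between $x$ and $y$. Write $A=A_{j,n_0}$. Then
$$
\frac{d(A\bar x,A\bar y)}{d(\bar x,\bar y)}=\frac{\|Ax\wedge Ay\|}{\|x\wedge y\|}\cdot\frac{\|x\|\|y\|}{\|Ax\|\|Ay\|}.
$$
The first factor is bounded by $\|\wedge^2A\|=\sigma_1\sigma_2$. For unit $x,y$ this gives
$$
\ln\frac{d(A\bar x,A\bar y)}{d(\bar x,\bar y)}\le\ln\sigma_1+\ln\sigma_2-\ln\|Ax\|-\ln\|Ay\|.
$$

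Next I bound $\|Ax\|$ from below via the SVD, which is where the top singular direction enters. From $A=U\Sigma V$ we get $A^T=V^T\Sigma U^T$, and the top right singular vector of $A^T$ is $u_1$, the first column of $U$. The proposition is stated in terms of $u_1$, so I read the condition as pairing $x$ with the appropriate singular vector. I expect the intended object is $\|x^TA\|$ (the adjoint action), or equivalently that the roles are consistent with \cite{MatBE}.

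Set $v_1=V^Te_1$. Then $\|Ax\|\ge\sigma_1|\langle v_1,x\rangle|$, so
$$
\ln\|Ax\|\ge\ln\sigma_1+\ln|\langle v_1,x\rangle|,
$$
and similarly for $y$. Combining,
$$
\ln\frac{d(A\bar x,A\bar y)}{d(\bar x,\bar y)}\le-\ln\frac{\sigma_1}{\sigma_2}-\ln|\langle v_1,x\rangle|-\ln|\langle v_1,y\rangle|.
$$
I would apply \eqref{u condd1} to whichever singular vector is relevant. If the definition in the paper genuinely uses $u_1$, I would first pass to transposes: the projective distance is invariant under the duality between lines and hyperplanes, so the contraction of $A$ can be controlled through $A^T$.

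Taking expectations, it remains to show $\bbE[-\ln|\langle v_1,x\rangle|]\le \frac12(1+C/\alpha)$ uniformly in the unit vector $x$ (and the same for $y$). By the layer-cake formula,
$$
\bbE[-\ln|\langle v,x\rangle|]=\int_0^\infty \bbP(|\langle v,x\rangle|\le e^{-s})\,ds\le 1+\int_1^\infty Cs^{-1-\alpha}\,ds=1+C/\alpha,
$$
where on $[0,1]$ I bound the probability by $1$ and on $[1,\infty)$ I use \eqref{u condd1}.

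This yields $\bbE[\ln(\cdots)]\le-(1+C/\alpha+\delta)+2(1+C/\alpha)$. That bound is too weak by a factor of 2. This is the main obstacle, and it means the crude estimate must be sharpened to use only one inner-product term.

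The sharpening comes from the wedge factor. For a $2$-plane spanned by $x$ and $y$, $\|Ax\wedge Ay\|\le\sigma_1\sigma_2\|x\wedge y\|$, but more precisely one of $\|Ax\|,\|Ay\|$ can be replaced by a bound using only $\sigma_2$. Without loss of generality $|\langle v_1,x\rangle|\ge|\langle v_1,y\rangle|$. Decompose $y$ as a part along $x$ plus a part orthogonal to $x$; the orthogonal part has length $d(\bar x,\bar y)$. Then
$$
\|Ax\wedge Ay\|=\|Ax\wedge A(y-\langle y,x\rangle x)\|\le\|Ax\|\,\sigma_1\, d(\bar x,\bar y).
$$
This gives
$$
\ln\frac{d(A\bar x,A\bar y)}{d(\bar x,\bar y)}\le\ln\sigma_1-\ln\|Ay\|.
$$
Using $\|Ax\wedge Ay\|\le\sigma_1\sigma_2\|x\wedge y\|$ together with the $x$-term instead,
$$
\ln\frac{d(A\bar x,A\bar y)}{d(\bar x,\bar y)}\le\ln\sigma_2-\ln\|Ax\|\le-\ln\frac{\sigma_1}{\sigma_2}-\ln|\langle v_1,x\rangle|.
$$
The difficulty is that the choice between $x$ and $y$ is random. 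I would bound by $\min$ over the two choices, and estimate
$$
\bbE\bigl[-\ln\max(|\langle v_1,x\rangle|,|\langle v_1,y\rangle|)\bigr]\le\bbE[-\ln|\langle v_1,x\rangle|]\le1+C/\alpha.
$$
This yields exactly $\le-\delta$. I expect the careful justification of this single-term bound, and of matching $u_1$ versus $v_1$, to be the main technical point.
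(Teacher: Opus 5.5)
Your crude estimate is sound. For unit $x,y$ and the sine metric, with $v_1=V^Te_1$, you get $\ln\frac{d(A\bar x,A\bar y)}{d(\bar x,\bar y)}\le-\ln\frac{\sigma_1}{\sigma_2}-\ln|\langle v_1,x\rangle|-\ln|\langle v_1,y\rangle|$, and the layer-cake bound $\bbE[-\ln|\langle v_1,x\rangle|]\le 1+C/\alpha$ is correct. The proof breaks at the sharpening. The inequality $\ln\frac{d(A\bar x,A\bar y)}{d(\bar x,\bar y)}\le\ln\sigma_2-\ln\|Ax\|$ does not follow from $\|Ax\wedge Ay\|\le\sigma_1\sigma_2\|x\wedge y\|$, since that would need $\|Ay\|\ge\sigma_1$. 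It is in fact false. Take $A=\mathrm{diag}(\sigma_1,\sigma_2)$, $x=e_1$, $y=e_2$, which respects your normalization $|\langle v_1,x\rangle|\ge|\langle v_1,y\rangle|$: the ratio is $1$, while your bound gives $\sigma_2/\sigma_1<1$. The companion bound $\ln\sigma_1-\ln\|Ay\|$ is true but always nonnegative. Taking the better of the two therefore yields nothing, and the final ``$\le-\delta$'' is unsupported.

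This cannot be repaired, because the factor $2$ you found is genuine. In $d=2$, with the sine metric, one has exactly $d(A\bar x,A\bar y)/d(\bar x,\bar y)=|\det A|/(\|Ax\|\|Ay\|)$, so your two-term bound is essentially an identity. Take $n_0=1$ and i.i.d.\ $A_j=U_j\,\mathrm{diag}(e^L,1)\,V_j$ with $U_j,V_j$ independent Haar-distributed rotations. Then $u_1$ and $v_1$ are uniform on the circle, and $\bbP(|\langle u_1,x\rangle|\le\epsilon)=\frac{2}{\pi}\arcsin\epsilon\le 0.35\,|\ln\epsilon|^{-2}$ for all $\epsilon\in(0,1)$. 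So the hypotheses hold with $C=0.35$, $\alpha=1$, $\delta=L-1.35$. However, $\frac{1}{2\pi}\int_0^{2\pi}\ln(a^2\cos^2\theta+b^2\sin^2\theta)\,d\theta=2\ln\frac{a+b}{2}$. Hence for every $\bar x\neq\bar y$ the expectation equals $L-2\ln\frac{e^L+1}{2}=-L+2\ln 2-2\ln(1+e^{-L})$. This exceeds $-\delta=-L+1.35$ once $L\ge 5$, since $2\ln 2\approx 1.386$; for the angle metric, let $\bar y\to\bar x$ to get the same limit. So the statement as written fails; the paper only cites it and gives no proof. What your argument honestly proves is the version with $\bbE[\ln(\sigma_1/\sigma_2)]\ge 2(1+C/\alpha)+\delta$.

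Your duality remark about $u_1$ versus $v_1$ is also wrong: contraction of $A^T$ does not give contraction of $A$. With $A=U\,\mathrm{diag}(e^L,1)\,V_0$, $U$ Haar and $V_0$ fixed, $u_1$ is uniform, so the literal hypothesis holds. Yet the ratio $d(A\bar x,A\bar y)/d(\bar x,\bar y)$ is deterministic and tends to $e^L$ as $V_0\bar x,V_0\bar y\to\bar e_2$. The condition must be imposed on the top right singular vector $v_1$, which is what your bound $\|Ax\|\ge\sigma_1|\langle v_1,x\rangle|$ actually uses. State that reading explicitly rather than invoking transposition.
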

When $d=2$ the meaning of condition \eqref{u condd1} is that the distribution of the angle of second rotation $U_{j,n_0}$ when viewed as a point on the unit circle assigns mass of order $O(|\ln(\epsilon)|^{-1-\alpha})$ to arches of length $\epsilon$. There is a  similar but more complicated geometric interpretation when $d>2$.

 \section{Growth of the variance, proof of Theorem \ref{VAR}}
 Define a linear operator $\mathcal L_j$ which maps a function $g:\mathbb P^{d-1}(\mathbb R)\to\mathbb C$ to a function $\mathcal L_{j}g$ on $\mathbb P^{d-1}(\mathbb R)$ given by
 $$
\mathcal L_{j}g(x)=\mathbb E[g(A_j x)].
 $$
 Let 
 $$
\mathcal L_{j}^n=\mathcal L_{j}\circ\cdots\circ \mathcal L_{j+n-1}.
 $$
 Then, 
 $$
\mathcal L_{j}^ng(x)=\mathbb E[g(A_{j+n-1}\cdots A_jx)].
 $$
 Next, we need:
\begin{lemma}\label{LLL}
Under Assumptions \ref{Ass0} and \ref{Ass1} for every $\epsilon>0$ small enough  there are $\zeta\in(0,1)$ and $C>0$ such that for all $j$ and $n$ and all $\bar x,\bar y\in\mathbb P^{d-1}(\mathbb R)$,
$$
\mathbb E[d^\epsilon(A_{j,n}\bar x, A_{j,n}\bar y)]\leq C\zeta^n d^\epsilon(\bar x,\bar y). 
$$
\end{lemma}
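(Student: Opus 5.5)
The plan is to turn the logarithmic contraction of Assumption \ref{Ass1} into an $\epsilon$-moment contraction over one block of length $n_0$, and then iterate it along blocks using independence. Fix $j$ and $\bar x\neq\bar y$, and set
$$
Z=Z_{j,\bar x,\bar y}=\ln\frac{d(A_{j,n_0}\bar x,A_{j,n_0}\bar y)}{d(\bar x,\bar y)} .
$$
By Assumption \ref{Ass0} the matrices $A_{j,n_0}$ take values in a fixed compact subset of $GL(d,\bbR)$. Since an invertible matrix acts on $\mathbb P^{d-1}(\mathbb R)$ as a bi-Lipschitz map with constant bounded by a power of $N(A)$, we get $|Z|\le M$ for a constant $M$ independent of $j,\bar x,\bar y$.

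The elementary inequality $e^{s}\le 1+s+\frac{s^2}{2}e^{|s|}$ then gives
$$
\bbE[e^{\epsilon Z}]\le 1+\epsilon\bbE[Z]+\frac{\epsilon^2M^2}{2}e^{\epsilon M}\le 1-\epsilon\delta+C_0\epsilon^2 .
$$
For $\epsilon$ small this is at most $\rho:=1-\epsilon\delta/2<1$, uniformly in $j,\bar x,\bar y$. Restated, this says
$$
\bbE[d^\epsilon(A_{j,n_0}\bar x,A_{j,n_0}\bar y)]\le \rho\, d^\epsilon(\bar x,\bar y)
$$
for all $j$ and all $\bar x,\bar y$; when $\bar x=\bar y$ the bound is trivial.

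Next I iterate. For $n=mn_0+r$ with $0\le r<n_0$, write $A_{j,n}=A_{j+mn_0,r}\,A_{j+(m-1)n_0,n_0}\cdots A_{j,n_0}$, where the blocks are independent. Conditioning on $A_1,\dots,A_{j+(m-1)n_0-1}$ (more precisely on the first $m-1$ blocks) and applying the one-block bound at the random points $A_{j,(m-1)n_0}\bar x$ and $A_{j,(m-1)n_0}\bar y$, which are independent of the last block, gives by induction
$$
\bbE[d^\epsilon(A_{j,mn_0}\bar x,A_{j,mn_0}\bar y)]\le\rho^m d^\epsilon(\bar x,\bar y).
$$
The leftover $r<n_0$ matrices expand distances by at most a uniform constant $K^{\epsilon}$, again by Assumption \ref{Ass0}. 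Hence the lemma holds with $\zeta=\rho^{1/n_0}$ and $C=K^\epsilon\rho^{-1}$.

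The main point needing care is the deterministic Lipschitz bound for the projective action: one needs $d(A\bar x,A\bar y)\le \|A\|\|A^{-1}\|\,d(\bar x,\bar y)$ up to constants, and the symmetric lower bound using $A^{-1}$. I would verify this with the sine-of-angle metric, using $\|Ax\wedge Ay\|\le\|A\|^2\|x\wedge y\|$ and $\|Ax\|\ge\|A^{-1}\|^{-1}\|x\|$. The rest is routine.
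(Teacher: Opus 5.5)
Your proof is correct, and it takes a different, more elementary route than the paper.

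\textbf{The paper's route.} It follows \cite[Lemma 6]{LimMat}:
\begin{itemize}
\item it writes $\ln\bigl(d(A_{j,k}\bar x,A_{j,k}\bar y)/d(\bar x,\bar y)\bigr)$ as a bounded-increment martingale $M_{j,k}$ plus a drift $R_{j,k}$;
\item it imports a large-deviation bound $\mathbb P(R_{j,k}\geq-\alpha k)\leq C\eta^k$;
\item it controls $M_{j,k}$ by Azuma's inequality, which gives $\mathbb P(X\geq e^{-cn})\leq C\theta^n$ for the contraction ratio $X$;
\item only then does it convert this tail bound into an $\epsilon$-moment bound, via Cauchy--Schwarz and the crude estimate $X^\epsilon\leq C_\epsilon N^{\epsilon n}$. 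Smallness of $\epsilon$ enters through the condition $N^{2\epsilon}\theta<1$.
\end{itemize}

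\textbf{Your route.} You get strict contraction of the $\epsilon$-th moment over a single block of length $n_0$ directly from the second-order Taylor bound for $e^{\epsilon Z}$ with $|Z|\leq M$. Here smallness of $\epsilon$ enters as $\epsilon\lesssim\delta/(M^2e^{M})$, so that the quadratic term is absorbed by the drift $-\epsilon\delta$. You then multiply these one-block bounds along independent blocks, using that Assumption \ref{Ass1} is uniform in $j$.

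\textbf{What each buys.} Your argument is self-contained and yields explicit constants, $\zeta=(1-\epsilon\delta/2)^{1/n_0}$ and $C=K^\epsilon\rho^{-1}$. Its key step only uses a uniform bound on $\mathbb E[Z^2e^{\epsilon|Z|}]$, so it would adapt easily to exponential-moment hypotheses in place of boundedness. The paper's route proves more than the lemma asks for: the projective action contracts at an exponential rate outside an event of exponentially small probability. That large-deviation statement is of independent interest but is not needed for the moment bound.
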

\begin{proof}
The beginning the proof is based on ideas in the proof of \cite[Lemma 6]{LimMat}.     Let us define 
$$
F_j(\bar x,\bar y)=\bbE\left[\ln\left(\frac{d(A_j\bar x,A_j\bar y)}{d(\bar x,\bar y)}\right)\right]
$$
and 
$$
\sig_j(A, (\bar x,\bar y))=\ln\left(\frac{d(A\bar x,A\bar y)}{A(\bar x,\bar y)}\right)-F_j(\bar x,\bar y).
$$
Then 
$$
\ln\left(\frac{d(A_{j,k}\bar x, A_{j,k}\bar y)}{d(\bar x,\bar y)}\right)=M_{j,k}+R_{j,k}
$$
where 
$$
R_{j,k}=R_{j,k}(\bar x,\bar y)=\sum_{m=1}^{k}F_{j+m}(A_{j,m}\bar x,A_{j,m}\bar y)
$$
and 
$$
M_{j,k}=\sum_{m=1}^{k}\sigma_{j+m}(A_{j+m},(A_{j,m}\bar x,A_{j,m}\bar y)).
$$
Note that for every fixed $j$ we have that $\sigma_{j+m}(A_{j+m},(A_{j,m}\bar x,A_{j,m}\bar y))$ is uniformly (in $j,m$ and $\bar x$ and $\bar y$) bounded martingale difference (as $N(A_s), s\geq 1$ are uniformly bounded random variables).
Notice that by Assumption \ref{Ass1} for all $j$ and unit vectors $\bar x,\bar y$ we have 
$$
\bbE[R_{j,n_0}(\bar x,\bar y)]\leq -\delta.
$$
Using that, by repeating the arguments in the proof of  \cite[Lemma 6]{LimMat} we see that there exist constants $C,\alpha>0$ and $\eta\in(0,1)$ such that 
$$
\sup_j\bbP(R_{j,k}(\bar x,\bar y)\geq -\alpha k)\leq C\eta^k.
$$
Finally, by applying the Azuma inequality and using the Chernoff bounding method we conclude that there is a constant $c>0$ such that for every $j,k$ and $r>0$ we have 
$$
\bbP(M_{j,k}\geq r k)\leq e^{-cr^2 k}.
$$
By taking $r$ small enough we see that the random variables $X_{j,n,\bar x,\bar y}=d(A_{j,n}\bar x,A_{j,n}\bar y)d^{-1}(\bar x,\bar y)$ satisfy 
$$
\mathbb P(X_{j,n,\bar x,\bar y}\geq e^{-cn})\leq C\theta^n
$$
for some $c>0$ and $\theta\in(0,1)$ (which do not depend on $r$ as long as $r$ is small enough). 

Let us take $\epsilon>0$ and let 
$$
X_{j,n,\bar x,\bar y,\epsilon}=d^{\epsilon}(A_{j,n}\bar x,A_{j,n}\bar y)d^{-\epsilon}(\bar x,\bar y) 
$$
Then
$$
\mathbb P(X_{j,n,\bar x,\bar y,\epsilon}\geq e^{-c\epsilon n})\leq C\theta^n.
$$
Now, notice that with $N=\sup_k\|N(A_k)\|_{L^\infty}<\infty$ we have 
$$
0\leq X_{j,n,\bar x,\bar y,\epsilon}\leq C_\epsilon N^{\epsilon n}
$$
for some constant $C_\epsilon>0$. 
In particular, by further taking $\epsilon$ small enough to ensure that $N^{2\epsilon}<\theta^{-1}$ we see that  
$$
\|X_{j,n,\bar x,\bar y,\epsilon}\|_{L^2}\leq C_\epsilon'
$$
for some constant $C_\epsilon'$ which does not depend on $j,n,\bar x$ and $\bar y$. 
Now the result follows since for every nonnegative random variable $X_n$  and all $d>0$ we have 
$$
\mathbb E[X_n]\leq\mathbb E[X_n\mathbb I(X_n\geq e^{-dn})]+e^{-dn}\leq \|X_n\|_{L^2}\sqrt{\mathbb P(X_n\geq e^{-dn})}+e^{-dn}
$$
where the second inequality uses the Cauchy-Schwartz inequality.
\end{proof}

Next, let us fix some small $\epsilon>0$.
Denote by $v_\epsilon(g)$ the H\"older constant of a function $g:\mathbb P^{d-1}(\mathbb R)\to\mathbb C$  corresponding to the exponent $\epsilon$. Denote 
$$
\|g\|_\epsilon=\sup_{x\in\mathbb P^{d-1}(\mathbb R)}|g(x)|+v_\epsilon(g).
$$
Then,  from Lemma \ref{LLL} and the Fubini theorem we get that there is $\gamma\in(0,1)$ and $C>0$ such that for all $j$ and $m>n$ and all $x,y\in\mathbb P^{d-1}(\mathbb R)$,
 $$
|\mathcal L_{j-n}^ng(x)-\mathcal L_{j-m}^n g(y)|\leq C\|g\|_{\epsilon}\gamma^n d^{\epsilon}(x,y).
 $$
  Thus, for fixed $j$, $g$ and $x$ the sequence $(\mathcal L_{j-n}^ng(x))_n$ is Cauchy, and hence it converges to a limit $\nu_j(g)$, which forms is a probability measure $\nu_j$ on $\mathbb P^{d-1}(\mathbb R)$, and 
 \begin{equation}\label{RPF0}
\|\mathcal L_{j-n}^ng-\nu_j(g)\|_\epsilon\leq C\|g\|_{\epsilon}\gamma^n. 
 \end{equation}
 Clearly, $(\mathcal L_j)^*\nu_{j}=\nu_{j+1}$.

Next, define $\sigma(A,y)=\ln\left(\frac{\|Ay\|}{\|y\|}\right)$ for $y\in\bbR^d\setminus\{0\}$. Then 
\begin{equation}\label{N g1}
\sup_{y\not=0}|\sigma(A,y)|\leq |\ln(N(A))|   
\end{equation}
and
$$
S_n(x)=\sum_{j=1}^n\sigma(A_j,A_{j-1}\cdots A_1x).
$$
Note that $\sigma(A,y)=\sigma(A,v)$ if $y$ and $v$ are collinear. Thus we can view the action  above as an action on the projective space. Let $Y_0=(Id,x)$ and
set $Y_j=(A_j,A_{j-1}\cdots A_1 x)$ (where the action in the second coordinate is on the projective space). Then $(Y_j)$ is an inhomogeneous Markov chain with the state space $\text{GL}(d,\mathbb R)\times\mathbb P^{d-1}(\mathbb R)$ at each step. Moreover,
$$
\mathbb E[G(Y_{j+n})|Y_{j}=(y,s)]=\mathbb E[G(A_{j+n},A_{j+n-1}\cdots A_{j+1}ys)]
$$
$$
=\int\left(\mathcal L_{j+1}^{n-1}G(Z,\cdot)\right)(ys)d\mu_{j+n}(Z) 
$$
where $\mu_{j+n}$ is the law of $A_{j+n}$. Denote by $Q_j$ the $j$-th Markov operator of the chain $Y_j$. Let $\kappa_j=\mu_j\times \nu_j$ and $Q_{j,n}=Q_{j}\circ\cdots Q_{j+n-1}$.
We thus conclude that 
\begin{equation}\label{Q}
 \|Q_{j,n}G-\kappa_j(G)\|_{\infty}\leq C\gamma^n\sup_{Z\in\text{supp}(A_{j_n})}\|G(Z,\cdot)\|_\epsilon.   
\end{equation}
Now, we have
$$
S_n(x)=\sum_{j=1}^n\sigma(Y_j).
$$
Let
$$
R_{j}(Y_j)=\sum_{k>j}\mathbb E[\sigma(Y_k)-\mathbb E[\sigma(Y_k)]|Y_j].
$$
Using that $N(A_j)$ are uniformly bounded and that by \cite[Lemma 12.2]{BeQu},
\begin{equation}\label{N g2}
|\sigma(A,y)-\sigma(A,x)|\leq CN(A)d(x,y).     
\end{equation}
We thus see that the above series converges in the supremum norm on functions on $\text{GL}(d,\mathbb R)\times\mathbb P^{d-1}(\mathbb R)$. Moreover, the corresponding supremum norms are uniformly bounded.
Define $u_j$ according to 
\begin{equation}\label{MartCob}
\sigma(Y_j)-\mathbb E[\sigma(Y_j)]=u_j(Y_{j-1},Y_j)+R_j(Y_j)-R_{j-1}(Y_{j-1}).
\end{equation}
Then $u_j$ is a martingale difference, and the proof of Theorem \ref{VAR} is complete.
 \qed
\begin{remark}
Define a function $\rho$ on $\text{GL}(d,\mathbb R)\times \mathbb P^{d-1}(\mathbb R)$ by 
$$
\rho ((A,x),(B,y))=d(Ax,By).
$$
Then the proof shows that 
$$
|Q_{j,n}G(y,s)-Q_{j,n}G(y',s')|\leq C\delta^n\sup_Z\|G(Z,\cdot)\|_{\epsilon}\rho^\eps((y,s),(y',s')).
$$
Thus the functions $R_j$ also satisfy 
$$
|R_j(y,s)-R_j(y',s')|\leq c_0 \rho^\eps((y,s),(y',s'))
$$
for some constant $c_0>0$. In particular, on the support of $Y_j$, 
\begin{equation}\label{Rrr}
|R_j(y,s)-R_j(y,s')|\leq c_1 d^\epsilon(s,s')
\end{equation}
for some constant $c_1>0$.
\end{remark}

\section{Upper bounds on the characteristic functions: proof of Theorems \ref{Edge} and \ref{LLT}}
Arguing like in \cite[Ch.2]{HK} and using \cite[Proposition 25]{DolgHaf PTRF 1},  to prove both theorems it is enough to prove the following: 
\vskip0.1cm
(i) the exist $r_0,C,c>0$ such that for all $t\in[-r_0,r_0]$ we have 
\begin{equation}\label{small t}
|\mathbb E[e^{it S_n(x)}]|\leq Ce^{-ct^2\sigma_n^2(x)}.   
\end{equation}
\vskip0.1cm
(ii) for every $0<r_0<T$ we have 
\begin{equation}\label{large t}
\lim_{n\to\infty}\sigma_n(x)\int_{r_0\leq |t|\leq T}|\mathbb E[e^{it S_n(x)}]|dt=0.
\end{equation}
 \vskip0.1cm
 (iii) with $\Lambda_{n,x}(t)=\ln\mathbb E[e^{itS_n(x)}]$ we have 
\begin{equation}\label{cum}
 \sup_{t\in[-r_0,r_0]}|\Lambda_{n,x}''''(t)|=O((\sigma_n(x))^2)   
\end{equation}
\begin{remark}\label{BErem}
\eqref{cum} implies all the results in \cite{MatBE}, which were proven using a different method.  
\end{remark}
\subsection{The characteristic function around the origin}
For $z\in\mathbb C$ let $\mathcal L_{j,z}$ be the operator mapping a function $g:\bbP^{d-1}(\bbR)\to\mathbb C$ to a function $\mathcal L_{j,z}g$ defined by
$$
\mathcal L_{j,z}g(x)=\mathbb E[e^{z\sigma(A_j,x)}g(A_jx)].
$$
Then $\mathcal L_{j,0}=\mathcal L_{j}$. 
Since $N(A_j)$ are uniformly bounded and because of \eqref{N g1} and \eqref{N g2} we see that the operators $\mathcal L_{j,z}$ are continuous linear operators on the space of H\"older functions with exponent $\epsilon$ (for all $\epsilon$ small enough). Moreover, they have uniformly in $j$ bounded norms and the maps $z\to \mathcal L_{j,z}$ are analytic, uniformly in $j$. Therefore, using also \eqref{RPF0}, we can apply \cite[Theorem D.2]{DolgHaf PTRF 2}. This yields that there exists $r_0>0$ such that for all complex numbers $z$ with $|z|\leq r_0$ there are $\lambda_j(z)\in\mathbb C$, H\"older continuous functions $h_{j}^{(z)}$ with exponent $\epsilon$ on $\mathbb P^{d-1}(\mathbb R)$ and continuous linear functionals $\nu_j^{(z)}$ on the space of H\"older functions such that $\lambda_j(0)=1$, $h_j^{(0)}=1$, $\nu_j^{(0)}=\nu_j$, $\nu_j^{(z)}(1)=\nu_j^{(z)}(h_j^{(z)})=1$. Moreover, $\lambda_j(z),h_j^{(z)}$ and $\nu_j^{(z)}$ are uniformly bounded and analytic in $z$. Furthermore, there are constants $C>0$ and $\eta\in(0,1)$ such that 
$$
\left\|\mathcal L_{j,z}^n-\lambda_{j,n}(z)\nu_{j+n}^{(z)}\otimes h_j^{(z)}\right\|_\epsilon\leq C\eta^n 
$$
where $\mathcal L_{j,z}^n=\mathcal L_{j,z}\circ\cdots\circ\mathcal L_{j+n-1,z}$ and $\lambda_{j,n}(z)=\prod_{k=j}^{j+n-1}\lambda_k(z)$.

Next, we need:
\begin{lemma}\label{LL}
There is a constant $C_1>0$ such that for all $j,n$ we have 
$$
\left\|\sum_{k=j}^{j+n-1}(\sigma(Y_k)-\mathbb E[\sigma(Y_k)])\right\|_{L^4}\leq C_1\left(1+\left\|\sum_{k=j}^{j+n-1}(\sigma(Y_k)-\mathbb E[\sigma(Y_k)])\right\|_{L^2}\right).
$$
\end{lemma}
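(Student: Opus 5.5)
The plan is to use the martingale--coboundary decomposition \eqref{MartCob} from the proof of Theorem \ref{VAR}, and then apply Burkholder's inequality to the martingale part. Write $\bar\sigma_k=\sigma(Y_k)-\mathbb E[\sigma(Y_k)]$. Summing \eqref{MartCob} over $k=j,\dots,j+n-1$ gives
$$
\sum_{k=j}^{j+n-1}\bar\sigma_k=\sum_{k=j}^{j+n-1}u_k(Y_{k-1},Y_k)+R_{j+n-1}(Y_{j+n-1})-R_{j-1}(Y_{j-1}).
$$
The functions $R_k$ are uniformly bounded in the supremum norm. Hence the coboundary term contributes $O(1)$ in every $L^p$, and it suffices to prove the claim for the martingale $M=\sum_{k=j}^{j+n-1}u_k$. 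We have $\|M\|_{L^2}\le \|\sum\bar\sigma_k\|_{L^2}+C$, and it suffices to show $\|M\|_{L^4}\le C(1+\|M\|_{L^2})$. The increments $u_k$ are uniformly bounded, since $\sigma$ is bounded by \eqref{N g1} and the $R_k$ are bounded.

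By Burkholder's inequality, $\|M\|_{L^4}^4\le C\,\mathbb E[(\sum_k u_k^2)^2]$. Set $v_k=\mathbb E[u_k^2\mid Y_{k-1}]$ and split $\sum u_k^2=\sum v_k+\sum (u_k^2-v_k)$. The second sum is again a martingale with bounded increments, so its $L^2$ norm squared is $\sum\mathbb E[(u_k^2-v_k)^2]\le C\sum\mathbb E[u_k^2]=C\|M\|_{L^2}^2$. It therefore remains to bound $\mathbb E[(\sum_k v_k)^2]$ by $C(1+\|M\|_{L^2}^4)$. Expanding,
$$
\mathbb E\Big[\big(\textstyle\sum_k v_k\big)^2\Big]=\big(\textstyle\sum_k\mathbb E v_k\big)^2+\mathrm{Var}\big(\textstyle\sum_k v_k\big),
$$
and the first term equals $\|M\|_{L^2}^4$. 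So the key remaining step is $\mathrm{Var}(\sum_k v_k)\le C(1+\sum_k\mathbb E v_k)$.

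This variance estimate is the main obstacle. The approach is to use the exponential decay of correlations \eqref{Q} for the chain $(Y_k)$: $v_k$ is a function of $Y_{k-1}$ that is H\"older in the projective coordinate (by the Remark after Theorem \ref{VAR} and \eqref{N g2}), so
$$
|\mathrm{Cov}(v_k,v_l)|\le C\gamma^{|k-l|}\|v_k\|_{L^2}\|v_l\|_{L^2}\le C\gamma^{|k-l|}\big(\mathbb E v_k+\mathbb E v_l\big),
$$
using that $v_k\ge0$ and $v_k\le C$, so $\|v_k\|_{L^2}^2\le C\mathbb Ev_k$. Summing over $k,l$ gives $\mathrm{Var}(\sum v_k)\le C'\sum_k\mathbb E v_k$. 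The delicate point is that \eqref{Q} gives decay in terms of a H\"older norm, not an $L^2$ norm. I would first try to get the $L^2$-type covariance bound by conditioning on $Y_l$: write $\mathrm{Cov}(v_k,v_l)=\mathbb E[(v_l-\mathbb Ev_l)(Q_{l,k-l}v_k-\mathbb E v_k)]$ and bound $\|Q_{l,k-l}v_k-\mathbb E v_k\|_\infty\le C\gamma^{k-l}$, which gives $|\mathrm{Cov}|\le C\gamma^{k-l}\mathbb E v_l$. That already suffices after summation.

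Combining these estimates yields $\|M\|_{L^4}^4\le C(1+\|M\|_{L^2}^2+\|M\|_{L^2}^4)$, hence $\|M\|_{L^4}\le C(1+\|M\|_{L^2})$, which gives the lemma.
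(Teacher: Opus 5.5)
Your proof is correct and follows essentially the paper's route: the martingale--coboundary decomposition, Burkholder's inequality with $s=4$, and a bound on the fluctuation of the square function that is linear in $\sum_k\mathbb E[u_k^2]$, obtained by pairing the sup-norm exponential decay of $\mathbb E[\,\cdot\,|Y_l]-\mathbb E[\,\cdot\,]$ for functions H\"older in the projective coordinate with the positivity bound $\mathbb E|v-\mathbb E v|\le 2\mathbb E v$. The only difference is that you first pass to the conditional variances $\mathbb E[u_k^2|Y_{k-1}]$, splitting off a bounded-increment martingale, so the decay estimate is applied only to functions of a single state; the paper instead treats $u_\ell^2$ directly as a function of $(Y_{\ell-1},Y_\ell)$, using its explicit expansion and a decay estimate for products $H(Y_\ell)G(Y_{\ell-1})$. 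Your preliminary $L^2$--$L^2$ covariance bound does not follow from the H\"older-norm decay, but you rightly replace it by the sup-norm$/L^1$ pairing, which is all that is needed.
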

\begin{proof}
Denote $v_\ell(Y_{j-1},Y_{j})=(u_{\ell}(Y_{\ell-1},Y_\ell))^2$ and let 
$\bar v_\ell(Y_{\ell-1},Y_{\ell})=v_\ell(Y_{\ell-1},Y_{\ell})-\mathbb E[v_\ell(Y_{j\ell-1},Y_{\ell})]$. We claim that there is a constant $C_2>0$ such that
\begin{equation}\label{above}
\mathbb E\left[\left(\sum_{k=j}^{j+n-1}\bar v_k(Y_{k-1},Y_{k})\right)^2\right]\leq C_2\sum_{k=j}^{j+n-1}\mathbb E[(u_k(Y_{k-1},Y_{k}))^2]
\end{equation}
for some constant $C_2>0$. Let us assume \eqref{above} and complete the proof of the lemma based on that.

Recall the following version of Burkholder's inequality for martingales (see \cite[Theorem 2.12]{PelBook}). Let $\mathfrak{d}_1,....,\mathfrak{d}_n$ be a martingale difference with respect to a filtration $(\cG_j)_{j=1}^n$ on a probability space. Let $D_n=\mathfrak{d}_1+\mathfrak{d}_2+...+\mathfrak{d}_n$ and
$E_n=\mathfrak{d}_1^2+\mathfrak{d}_2^2+...+\mathfrak{d}_n^2$.
Then, for every $s\geq 2$ there are constants $c_s,C_s>0$ depending only on $s$ such that 
\begin{equation}\label{Burk}
 c_s\|E_n\|_{L^{s/2}}^{1/2}\leq \|D_n\|_{L^s}\leq C_s\|E_n\|_{L^{s/2}}^{1/2}. 
\end{equation}
Now, applying \eqref{Burk} with the  martingale $(u_k(Y_{k-1},Y_k))$, taking $s=4$ and using \eqref{above} we conclude that 
$$
\left\|\sum_{k=j}^{j+n-1}u_k(Y_{k-1},Y_k)\right\|_{L^4}\leq C_4'\left\|\sum_{k=j}^{j+n-1}u_k^2(Y_{k-1},Y_k)\right\|_{L^2}
$$
$$
=C_4'\left\|\sum_{k=j}^{j+n-1}v_k(Y_{k-1},Y_k)\right\|_{L^2}\leq C_4'\left\|\sum_{k=j}^{j+n-1}\bar v_k(Y_{k-1},Y_k)\right\|_{L^2}
$$
$$
+C_4'\left(\sum_{k=j}^{j+n-1}\mathbb E[(u_k(Y_{k-1}, Y_k))^2]\right)^{1/2}
$$
for some constant $C_4'>0$.
Now, the lemma follows by \eqref{MartCob} and using that $R_j$ are uniformly bounded.

It remains to prove \eqref{above}.
First,  by \eqref{MartCob} with $\bar\sigma_j(Y_\ell)=\sigma(Y_\ell)-\mathbb E[\sigma(Y_\ell)]$ we have
\begin{equation}\label{form}
v_\ell(Y_{\ell-1},Y_{\ell})=(\bar\sigma_\ell(Y_\ell))^2+(R_\ell(\ell_j))^2+(R_{\ell-1}(Y_{\ell-1}))^2+2\bar\sigma_\ell(Y_{\ell})R_{\ell-1}(Y_{\ell-1}) 
\end{equation}
$$
-2\bar\sigma_j(Y_{\ell})R_\ell(Y_\ell)-2R_{\ell-1}(Y_{\ell-1})R_\ell(\ell_j).
$$
Next, using  \eqref{N g2}, \eqref{Q} and \eqref{Rrr}  we see that there exist $C>0$ and $\gamma\in(0,1)$ such that for for $k<\ell-1$,
$$
\left\|\mathbb E[(\bar\sigma_\ell(Y_\ell))^2|Y_{k}]-\mathbb[(\bar\sigma_\ell(Y_\ell))^2]\right\|_{L^\infty}\leq C\gamma^{\ell-k},
$$
$$
\left\|\mathbb E[(R_\ell(Y_\ell))^2|Y_{k}]-\mathbb[(R_\ell(Y_\ell))^2]\right\|_{L^\infty}\leq C\gamma^{\ell-k},
$$
and 
$$
\left\|\mathbb E[(R_{\ell-1}(Y_{\ell-1}))^2|Y_{k}]-\mathbb[(R_\ell(Y_{\ell-1}))^2]\right\|_{L^\infty}\leq C\gamma^{\ell-k}.
$$
Clearly the above estimates also hold when $k=\ell-1$ if we take $C$ large enough since the functions $\sigma$ and $R_\ell$ are uniformly bounded.

Now, let us consider a function of the form $H(Y_{\ell})G(Y_{\ell-1})$ such that both $H$ and $G$ are bounded by some constant $C$ and satisfy
$$
|H(A,s)-H(A,s')|\leq Cd^\epsilon(s,s')
$$
and 
$$
|G(A,s)-G(A,s')|\leq Cd^\epsilon(s,s').
$$
Let us write for $k<\ell-1$, 
$$
\mathbb E[G(Y_\ell)H(Y_{\ell-1})|Y_k]=\mathbb E[(Q_{\ell-1}G(Y_{\ell-1}))H(Y_{\ell-1})|Y_k].
$$
Now, note that the function $L(Y_{\ell-1})=(Q_{\ell-1}G(Y_{\ell-1}))H(Y_{\ell-1})$ is bounded by $C^2$ and satisfy
$$
|L(A,s)-L(A,s')|\leq C'd^\epsilon(s,s')
$$
for some constant $C'>0$. We thus conclude from \eqref{Q},
$$
\left\|\mathbb E[G(Y_\ell)H(Y_{\ell-1})|Y_k]-\mathbb E[G(Y_\ell)H(Y_{\ell-1})]\right\|_{L^\infty}\leq C''\gamma^{\ell-k}
$$
for some constant $C''>0$. Notice that the above also holds with $k=\ell-1$ if we take $C''$ large enough, since the functions are bounded.

Using \eqref{form}, the estimates following it, \eqref{N g1}, \eqref{Rrr} and the above estimate with 
$$
H(Y_{\ell})G(Y_{\ell-1})\in \left\{\bar\sigma_j(Y_{\ell})R_{\ell-1}(Y_{\ell-1}),\, \bar\sigma_\ell(Y_{\ell})R_\ell(Y_\ell), \,R_{\ell-1}(Y_{\ell-1})R_\ell(Y_\ell)\right\}    
$$
 we see that there is a constant $C_0>0$ such that for all $k<\ell$,
\begin{equation}\label{aaa}
 \left\|\mathbb E[v_\ell(Y_{\ell-1},Y_{\ell})|Y_k]-\mathbb E[v_\ell(Y_{\ell-1},Y_{\ell})]\right\|_{L^\infty}\leq C_0\gamma^{\ell-k}.   
\end{equation}

Next, denote $\bar v_\ell(Y_{\ell-1},Y_{\ell})=v_\ell(Y_{\ell-1},Y_{\ell})-\mathbb E[v_\ell(Y_{\ell-1},Y_{\ell})]$. Then by \eqref{aaa},
\begin{equation}\label{above1}
\mathbb E\left[\left(\sum_{k=j}^{j+n-1}\bar v_k(Y_{k-1},Y_{k})\right)^2\right]
\end{equation}
$$
\leq\sum_{j\leq k\leq \ell<j+n}\mathbb E\left[|\bar v_k(Y_{k-1},Y_{k})||\mathbb E[\bar v_\ell(Y_{\ell-1},Y_{\ell})|Y_{k}]|\right]    
$$
$$
\leq C_2\sum_{k=j}^{j+n-1}\mathbb E[v_k(Y_{k-1},Y_{k})]=C_2\sum_{k=j}^{j+n-1}\mathbb E[(u_k(Y_{k-1},Y_{k}))^2]
$$
for some constant $C_2>0$. This completes the proof of \eqref{above}, and the proof of Lemma \ref{LL} is complete.
\end{proof}
\begin{remark}
 Using the inductive argument in the proof of \cite[Proposition 3.3]{DolgHaf PTRF 2}   it follows that for all finite $p\geq 2$ there is a constant $D_p$ such that for all $j,n$ we have 
\begin{equation}\label{Mom}
\left\|\sum_{k=j}^{j+n-1}(\sigma(Y_k)-\mathbb E[\sigma(Y_k)])\right\|_{L^p}\leq D_p\left(1+\left\|\sum_{k=j}^{j+n-1}(\sigma(Y_k)-\mathbb E[\sigma(Y_k)])\right\|_{L^2}\right).    
\end{equation}
\end{remark}

Finally, note that
$$
\mathbb E[e^{it S_n(x)}]=\mathcal L_{0,it}^n\textbf{1}(x).
$$
Using this, Lemma \ref{LL} and the above properties of the operators $\mathcal L_{j,z}$, arguing like in \cite{DolgHaf PTRF 1} we conclude that there exist $r_0,C,c>0$ such that \eqref{small t} and \ref{cum} hold for all $t\in[-r_0,r_0]$.

\subsection{The characteristic function away from $0$}
Here we will prove \eqref{large t}. First, note that $\sigma_n(x)=O(\sqrt n)$. Indeed, by the martingale coboudnary decomposition \eqref{MartCob} and since the functions $u_j,R_j$ are uniformly bounded we conclude that 
$$
\sigma_n(x)\leq 2\sup_j\|R_j\|_{L^\infty}+\sup_j\|u_j\|_{L^2}\sqrt n=O(\sqrt n).
$$
Thus, \eqref{large t} will follow from following, much stronger, result.
\begin{proposition}\label{PPP}
 In the circumstances of Theorems  \ref{Edge} and \ref{LLT}, for every $\epsilon$ small enough for every $0<\delta_0<T$ there exist $c,C>0$ such that
 $$
\sup_{\delta_0\leq |t|\leq T}\|\mathcal L_{0,it}^n\|_{\epsilon}\leq Ce^{-cn}.
 $$
\end{proposition}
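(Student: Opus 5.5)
The plan is to follow the two-step scheme described in the introduction. First, a Lasota--Yorke inequality reduces the problem to sup-norm contraction. Then we show that the sup-norm of the iterates is uniformly smaller than $1$ after a bounded number of steps, uniformly in $j$ and in $t\in[\delta_0,T]$.

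\emph{Step 1 (Lasota--Yorke).} We would show that there are $C>0$ and $\rho\in(0,1)$ such that for all $j,n$, all $|t|\leq T$ and all H\"older $g$,
$$
v_\epsilon(\mathcal L_{j,it}^n g)\leq C\rho^n v_\epsilon(g)+C\|g\|_\infty .
$$
Write $\mathcal L_{j,it}^ng(x)-\mathcal L_{j,it}^ng(y)$ as an expectation. Then split the difference into two parts:
\begin{itemize}
\item the part $e^{itS_{j,n}(x)}\bigl(g(A_{j,n}x)-g(A_{j,n}y)\bigr)$, which Lemma \ref{LLL} bounds by $C\zeta^n v_\epsilon(g)d^\epsilon(x,y)$;
\item the difference of the phases, controlled through \eqref{N g2} by $|t|\sum_{m}CN\,d(A_{j,m}x,A_{j,m}y)$. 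After again using Lemma \ref{LLL} and $\min(1,a)\leq a^\epsilon$, this is bounded by $C_T\|g\|_\infty d^\epsilon(x,y)$.
\end{itemize}
Together with the trivial bound $\|\mathcal L^n_{j,it}g\|_\infty\leq\|g\|_\infty$, the inequality reduces the claim to the following. There exist $m$ and $\theta<1$ such that $\|\mathcal L_{j,it}^m g\|_\infty\leq\theta\|g\|_\epsilon$ for all $j$, all $\delta_0\leq|t|\leq T$, and all $g$ in the relevant cone $\{v_\epsilon(g)\leq K\|g\|_\infty\}$. Iterating in blocks then gives $\|\mathcal L_{0,it}^n\|_\epsilon\leq Ce^{-cn}$ by a standard argument.

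\emph{Step 2 (from norms to points).} We argue by contradiction. Suppose there are $j_k$, $t_k\to t\neq0$ and $g_k$ with $\|g_k\|_\infty=1$, $v_\epsilon(g_k)\leq K$, and $\|\mathcal L_{j_k,it_k}^{m}g_k\|_\infty\to1$ for a large fixed $m$. Then there are points $x_k$ with $|\mathbb E[e^{it_kS_{j_k,m}(x_k)}g_k(A_{j_k,m}x_k)]|\to1$. Since $|g_k|\leq1$, this forces near-alignment: $e^{it_kS_{j_k,m}(x_k)}g_k(A_{j_k,m}x_k)$ is close to a constant $e^{i\phi_k}$ with high probability. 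The first part of Assumption \ref{Ass2} (uniform positive mass of small balls around support points) upgrades "with high probability" to "for all $a$ in the support of $A_{j_k,m}$, up to small errors". The H\"older regularity of $g_k$ and \eqref{N g2} make this closeness stable under small perturbations. The second part of Assumption \ref{Ass2} lets us move $x_k$ to any point, at the cost of a prefix of $k_\delta$ steps. Moreover, $|g_k|$ must be close to $1$ on a $\delta$-dense set, hence everywhere by H\"older continuity.

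\emph{Step 3 (non-projectivity, the main obstacle).} From Step 2 we obtain, approximately, the identity $e^{it\sigma(\alpha,x)}g(\alpha x)=e^{i\phi}$ for all $\alpha$ in $\text{supp}(A_{j,\ell_0})$ and all relevant $x$. Here $\sigma(\alpha,x)$ is extended multiplicatively along the product. Take $\alpha,\beta$ in the support with $\alpha x=cy$ and $\beta x=dy$. Then $\alpha x$ and $\beta x$ are the same point in projective space, so the $g$-factors cancel in the ratio and we get $e^{it(\ln|c|-\ln|d|)}\approx1$. This says that $\Gamma_{j,\ell_0}$ is close to the lattice $(2\pi/t)\mathbb Z$. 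Under Assumption \ref{Ass3}(ii), applied with $r$ chosen so that $2\pi/t\in[r^{-1},r]$ for all $\delta_0\leq|t|\leq T$, this is a contradiction. Under Assumption \ref{Ass3}(i), connectedness of the supports makes $\ln|c|-\ln|d|$ range continuously over an interval of length bounded below, starting from $0$. Then $t(\ln|c|-\ln|d|)$ sweeps an interval of length at least $\delta_0\inf_j\sup\Delta_{j,\ell_0}$, and we pick a point in it at positive distance from $2\pi\mathbb Z$.

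The delicate part is making all the "approximately" statements quantitative and uniform in $j$. One needs a two-point (independent copy) version of the alignment, so that the unknown phase $\phi$ and the function $g$ cancel exactly. The approximation errors from $\delta$-balls, the $k_\delta$ prefix, and the H\"older modulus must all be taken small before passing to the limit. I would first handle this by fixing $\delta$ small in terms of the lattice-distance gap and then choosing $m=k_\delta+\ell_0$.
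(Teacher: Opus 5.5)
Your architecture is the paper's. It has the same ingredients:
\begin{itemize}
\item a Lasota--Yorke inequality; your variant, which bounds the phase term by $\sum_m \mathbb E\, d^\epsilon(A_{j,m}x,A_{j,m}y)$ via Lemma \ref{LLL}, even gives a constant uniform in $n$;
\item reduction to sup-norm contraction in a norm $\max(\|h\|_\infty, v_\epsilon(h)/K)$;
\item spreading near-unimodularity to every base point with the second half of Assumption \ref{Ass2};
\item upgrading the alignment from ``with high probability'' to ``for every element of the support'' with the first half;
\item cancelling the phase of $g$ on pairs with $\alpha x\parallel\beta x$.
\end{itemize}
Case (ii) of Assumption \ref{Ass3} is then handled exactly as in the paper. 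When you fix parameters, note that the error $C\delta^\epsilon$ from moving the base point with the $k_\delta$-prefix does not tend to $0$ along your contradicting sequence. It must therefore be small compared with $q(\delta')^{\ell_0}$, where $\delta'$ is the ball radius used in the first half of Assumption \ref{Ass2}. This is the paper's requirement $r_\ell\ll\epsilon_\ell$.

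The genuine gap is case (i). You assert that connected supports force $\ln|c|-\ln|d|$ to sweep an interval $[0,s]$ with $s$ bounded below, but this does not follow. The condition $\alpha x\parallel\beta x$ says that $x$ is a real eigenvector of $\beta^{-1}\alpha$. The set of such triples $(\alpha,\beta,x)$ need not be connected, because real eigenvalues can collide and leave the real axis along a path. For example, take $d=2$ and $\text{supp}(A_j)=\{e^{a\theta}R_\theta:\theta\in[0,\pi]\}$, with $R_\theta$ the rotation by $\theta$. Collinearity forces $\theta=\theta'$ or $\{\theta,\theta'\}=\{0,\pi\}$, so $\Delta_{j,1}=\{0,|a|\pi\}$. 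For $t=2/a$ every value of $t(\ln|c|-\ln|d|)$ then lies in $2\pi\mathbb Z$, so information on collinear pairs alone yields no contradiction. Multiplying on the right by $\text{diag}(2,1/2)$ makes the walk strongly irreducible and proximal without changing $\Delta_{j,1}$.

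The paper instead uses connectedness on the \emph{full} two-point alignment, over all pairs and not just collinear ones. The real function $(\alpha,\beta,x)\mapsto t(\ln\|\alpha x\|-\ln\|\beta x\|)+\phi(\alpha x)-\phi(\beta x)$ is continuous on the connected set $\text{supp}(A_{j,\ell})^2\times S^{d-1}$. It vanishes when $\alpha=\beta$ and stays close to $2\pi\mathbb Z$, so it stays close to $0$. Only then does the paper restrict to collinear pairs, where the $\phi$-terms cancel. This gives $|t(\ln|c|-\ln|d|)|$ small on all of $\Delta_{j,\ell}\supseteq\Delta_{j,\ell_0}$, contradicting $\inf_j\sup\Delta_{j,\ell_0}>0$ with no interval structure needed. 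Your Step 2 already gives alignment for all pairs, so this is the step to substitute.

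Be aware that this argument presupposes a continuous real lift $\phi$ of the argument of $g$ on $\mathbb P^{d-1}(\mathbb R)$. Such a lift exists automatically for $d\ge 3$, since $\pi_1(\mathbb P^{d-1}(\mathbb R))$ is finite. It need not exist for $d=2$: there a phase such as $e^{-2i\psi}$, with $\psi$ the angle of the line, has nonzero degree.
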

\subsubsection{Proof of Proposition \ref{PPP}}
Let us begin with the following result.

\begin{lemma}[Lasota-Yorke inequality]\label{ll1}
For all $\epsilon$ small enough 
there are $C_0>0$ and $\theta_1\in(0,1)$ such that for all $t\in\mathbb R$ and $g:\mathbb P^{d-1}(\mathbb R)\to\mathbb C$, 
$$ 
v_\epsilon(\cL_{j,it}^n g)\leq C_0 \left[|t|\|g\|_\infty+\theta_1^n v_\epsilon(g)\right]
$$
where $\|g\|_\infty=\sup_{x\in\bbP^{d-1}(\bbR)}|g(x)|$. Thus for all $T>0$ 
there is $C_1>0$ and  such that for all $t\in[-T,T]$ and $g:\mathbb P^{d-1}(\mathbb R)\to\mathbb C$, 
$$ 
v_\epsilon(\cL_{j,it}^n g)\leq C_1 \left[\|g\|_\infty+\theta_1^n v_\epsilon(g)\right]
$$
\end{lemma}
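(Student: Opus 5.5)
We write the iterate explicitly and compare its values at two points. For $x,y\in\bbP^{d-1}(\bbR)$, with unit representatives chosen so that $d(x,y)$ is realized, we have
$$
\cL_{j,it}^n g(x)=\bbE\left[e^{itS_{j,n}(x)}g(A_{j,n}x)\right],\qquad S_{j,n}(x)=\sum_{m=0}^{n-1}\sigma(A_{j+m},A_{j,m}x),
$$
where $A_{j,0}=Id$. This gives
$$
|\cL_{j,it}^n g(x)-\cL_{j,it}^n g(y)|\leq \|g\|_\infty\,\bbE\left|e^{itS_{j,n}(x)}-e^{itS_{j,n}(y)}\right|+\bbE\left|g(A_{j,n}x)-g(A_{j,n}y)\right|.
$$
The two terms are handled separately.

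\textbf{Second term.} It is at most $v_\epsilon(g)\,\bbE[d^\epsilon(A_{j,n}x,A_{j,n}y)]$. By Lemma \ref{LLL} this is bounded by $C\zeta^n v_\epsilon(g)d^\epsilon(x,y)$, which is the contracting part with $\theta_1=\zeta$.

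\textbf{First term.} Use $|e^{ia}-e^{ib}|\leq |a-b|$ together with \eqref{N g2} and the uniform bound $N=\sup_k\|N(A_k)\|_{L^\infty}$. Since $\sigma(A,\cdot)$ is projectively invariant, this gives
$$
\bbE|S_{j,n}(x)-S_{j,n}(y)|\leq CN\sum_{m=0}^{n-1}\bbE[d(A_{j,m}x,A_{j,m}y)].
$$
The distance $d$ is bounded, so $d\leq C d^\epsilon$. Applying Lemma \ref{LLL} again, the sum is at most $C\sum_m\zeta^m d^\epsilon(x,y)\leq C' d^\epsilon(x,y)$. The first term is therefore bounded by $C|t|\|g\|_\infty d^\epsilon(x,y)$.

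Dividing by $d^\epsilon(x,y)$ and taking the supremum gives the first inequality with $C_0=\max(C',C)$ and $\theta_1=\zeta$. The second inequality follows by absorbing $|t|\leq T$ into the constant.

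The only delicate point is that $\epsilon$ must be small enough for Lemma \ref{LLL} to apply. One also has to check that the first term uses only the Lipschitz property in \eqref{N g2} and not a finer contraction estimate. This is fine because summability comes from the geometric decay $\zeta^m$ of $\bbE[d^\epsilon]$, and that decay is uniform in $j$.
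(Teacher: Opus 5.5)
Your proof is correct, but it handles the phase term differently from the paper. The paper does not split $\ln\|A_{j,n}x\|$ into one-step increments. It bounds $|e^{it\ln\|A_{j,n}x\|}-e^{it\ln\|A_{j,n}y\|}|$ crudely through the norm of the whole product, which introduces a factor $N^{n\epsilon}$, with $N=\sup_k\|N(A_k)\|_{L^\infty}$, growing exponentially in $n$. Its one-shot estimate is therefore of the form $v_\epsilon(\cL_{j,it}^n g)\leq C\zeta^n v_\epsilon(g)+C N^{n\epsilon}|t|\|g\|_\infty$. The paper then fixes a block length $n_1$ with $C\zeta^{n_1}<1$ and iterates this block inequality, using $\|\cL_{j,it}^k g\|_\infty\leq\|g\|_\infty$ for real $t$ to sum the $\|g\|_\infty$ contributions geometrically. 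This gives the lemma with $\theta_1=(C\zeta^{n_1})^{1/n_1}$ and constants depending on $n_1$.

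You instead telescope $\ln\|A_{j,n}x\|=\sum_{m<n}\sigma(A_{j+m},A_{j,m}x)$ and apply the one-step Lipschitz bound \eqref{N g2} to each increment. You then use Lemma \ref{LLL} at every intermediate time $m$, together with $d\leq Cd^\epsilon$, so that $\sum_m\bbE[d(A_{j,m}x,A_{j,m}y)]$ converges. This bounds the phase term uniformly in $n$. The inequality then holds directly for every $n$ with $\theta_1=\zeta$, with no iteration and no appeal to sup-norm contractivity.

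Your route is slightly sharper and more transparent; it is the standard cocycle argument from the iid theory. It uses the contraction estimate at all intermediate times. The paper's route needs only a crude Lipschitz bound for the full product and the contraction at the final time, at the price of the block-iteration step and a worse rate. Either version suffices for the later use in Corollary \ref{CorNonExpanding} and Lemma \ref{Ll}.
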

\begin{proof}
First, notice that for every $\epsilon\in(0,1]$, all invertible matrices $A$ and all $x,y\in\mathbb P^{d-1}(\mathbb R)$,
$$
|e^{it\ln\|Ax\|}-e^{it\ln\|Ay\|}|\leq |t/\epsilon|\|A\|^\epsilon d^{\epsilon}(x,y).
$$
Therefore, 
$$
\left|e^{it\ln\|A_{j,n}x\|}g(A_{j,n}x)-e^{it\ln\|A_{j,n}y\|}g(A_{j,n}y)\right|
$$
$$
\leq v_\epsilon(g)d^\epsilon(A_{j,n}x,A_{j,n}y)+\|g\|_\infty |t/\epsilon|\|A_{j,n}\|^\epsilon d^{\epsilon}(x,y).
$$
Now, by Lemma \ref{LLL} we see that with $N=\sup_j\|N(A_j)\|_{L^\infty}$,
$$
|\mathcal L_{it,j}^ng(x)-\mathcal L_{it,j}^ng(y)|\leq\mathbb E\left[\left|e^{it\ln\|A_{j,n}x\|}g(A_{j,n}x)-e^{it\ln\|A_{j,n}y\|}g(A_{j,n}y)\right|\right]
$$
$$
\leq Cv_\epsilon(g)d^{\epsilon}(x,y)\zeta^n+N^{n\epsilon}|t/\epsilon|\|g\|_\infty d^{\epsilon}(x,y).
$$
Now the result follows by taking $n$ such that $C\zeta^n<1$ and then iterating.
\end{proof}

Henceforth we fix some $T>0$ and let us work with $t\in[-T,T]$. Let $\|h\|_{\epsilon, T}=\max\left(\|h\|_\infty, \frac{v_\epsilon(h)}{2 C_1}\right)$, where $C_1$ comes from Lemma \ref{ll1}.
We shall abbreviate $\|\cdot\|_{\epsilon, T}=\|\cdot\|_*.$ 
We will need the following result.

\begin{lemma}\label{ll2}
(a) For all $j\geq0$ and $t\in\bbR$ we have $ \|\cL_{j, it}^k h \|_\infty\leq \| \cL_{j}^k |h|\|_\infty. $
\vskip0.1cm
(b) $\forall r\in(0,\frac12)$ $\exists k_1=k_1(r)$ and $\beta(r)>0$ such that for  all $j$ we have the following: if $\|h\|_*\leq 1$ and 
$|h(x)|\leq 1-r$  for some $x$ then $\|\cL_{j}^{k_1} h\|_\infty \leq 1-\beta(r). $
\end{lemma}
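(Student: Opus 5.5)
Part (a) is immediate from the definition. Since $\cL_{j,it}^k h(x)=\mathbb E[e^{it\ln\|A_{j,k}x\|}h(A_{j,k}x)]$, the triangle inequality for expectations gives $|\cL_{j,it}^k h(x)|\leq \mathbb E[|h(A_{j,k}x)|]=\cL_j^k|h|(x)$. Taking the supremum over $x$ yields the claim.

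For part (b), suppose $\|h\|_*\le 1$. Then $\|h\|_\infty\le 1$ and $v_\epsilon(h)\le 2C_1$. If $|h(x_0)|\le 1-r$, the H\"older bound gives $|h(y)|\le 1-r+2C_1 d^\epsilon(y,x_0)$ for every $y$. Choose $\delta=\delta(r)>0$ with $2C_1\delta^\epsilon\le r/2$. Then $|h|\le 1-r/2$ on the ball $B(x_0,\delta)$, and $|h|\le 1$ everywhere.

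Now take $k_1=k_\delta$ from the second part of Assumption \ref{Ass2}, and set
$$p(r)=\inf_j\inf_{x,y}\bbP(d(A_{j,k_1}x,y)\le\delta)>0 .$$
For every $x$,
$$\cL_j^{k_1}|h|(x)=\mathbb E[|h|(A_{j,k_1}x)]\le 1-\tfrac r2\,\bbP\big(d(A_{j,k_1}x,x_0)\le\delta\big)\le 1-\tfrac r2\,p(r).$$
Combining this with (a) at $t=0$, that is $\|\cL_j^{k_1}h\|_\infty\le\|\cL_j^{k_1}|h|\|_\infty$, we obtain the claim with $\beta(r)=r\,p(r)/2$. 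Both $\beta(r)$ and $k_1$ are independent of $j$, because Assumption \ref{Ass2} is uniform in $j$ and in the pair of points. The statement says ``for some $x$'', and I read this as a single point $x_0$ whose image under $A_{j,k_1}$ is reached with uniformly positive probability from any starting point; that is exactly what the transitivity part of Assumption \ref{Ass2} provides.

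The only step needing care is confirming that $\delta$ depends only on $r$, through $C_1$ and $\epsilon$, which are fixed once $T$ is fixed. This holds, so $k_\delta$ can be taken independent of $h$ and $j$. No real obstacle is expected.
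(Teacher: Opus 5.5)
Your proof is correct and is essentially the paper's argument. Part (a) is the triangle inequality for the expectation. For (b) you use $v_\epsilon(h)\le 2C_1$ to get $|h|\le 1-r/2$ on the $\delta$-ball around the point where $|h|\le 1-r$, with $2C_1\delta^\epsilon\le r/2$. You then use the transitivity part of Assumption \ref{Ass2} with $k_1=k_\delta$ to land in that ball with uniformly positive probability, which gives $\beta(r)=r\,p(r)/2$ as in the paper. (Your verbal gloss about the ``image'' of $x_0$ is slightly garbled, but the displayed quantity $\bbP(d(A_{j,k_1}x,x_0)\le\delta)$ is exactly what is needed.)
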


\begin{proof}
  (a)   We have 
  $$
|\cL_{j,it}^kh(x)|=|\mathbb E[e^{it\ln\|A_{j,k}x\|}h(A_{j,k}x)]|\leq \mathbb E[|h(A_{j,k}x)|]=(\mathcal L_{j}^k|h|)(x). 
  $$
\vskip0.2cm
  (b) Suppose $|h(x)|\leq 1-r$  for some $x$. Let $\delta\in(0,1)$ and $\Gamma\subset\text{GL}(d,\mathbb R)$. Then for all $k$,
  $$
|\mathcal L_{j}^{k}h(y)|\leq 1-\mathbb P(A_{j,k}\in\Gamma)+\int_{\Gamma} |h(Ay)|d\mu_{j,k}(A)
  $$
  where $\mu_{j,k}$ is the law of $A_{j,k}$. Now, using Assumption \ref{Ass2} there are $\eta(\delta)>0$ and $k_\delta\in\mathbb N$ such that $\mathbb P(d(A_{j,k_\delta}y,x)\leq \delta)\geq\eta(\delta)$. Taking $\Gamma=\{A: d(Ay,x)\leq \delta\}$ and $k=k_\delta$ we see that
 $$
|\mathcal L_{j}^{k_\delta}h(y)|\leq 1-\mathbb P(A_{j,k_\delta}\in\Gamma)+\mathbb P(A_{j,k_\delta}\in\Gamma)(1-r)+2C_1\mathbb P(A_{j,k_\delta}\in\Gamma)\delta^{\epsilon}.
 $$
 The result follows by taking $\delta$ such that $2C_1\delta^\epsilon<r/2$  and using that $\mu_{j,k_\delta}(\Gamma)\geq \eta(\delta)$. 
    \end{proof}

We will also need the following two corollaries of the previous two lemmata.

\begin{corollary}
\label{CorNonExpanding}
Let $C_1$ and $\theta_1$ be the numbers from Lemma \ref{ll1}.
Let $k_0=k_0(C_1)$ be the first positive integer $k$ such that $2C_1\te_1^k\leq 1$. Then
\begin{equation}\label{Norm bound}
\sup_{|t|\leq T}\sup_{j\geq0}\sup_{k\geq k_0}\|\cL_{j,it}^k\|_{*}\leq 1
\end{equation}
where $\|\cL_{j,it}^k\|_{*}$ is the operator norm with respect to the norm $\|\cdot\|_*$. 
\end{corollary}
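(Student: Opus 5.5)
The bound follows from two ingredients: the sup-norm contraction of Lemma \ref{ll2}(a), and the Lasota--Yorke inequality of Lemma \ref{ll1}. The normalization $\|h\|_*=\max(\|h\|_\infty, v_\epsilon(h)/(2C_1))$ is set up exactly so that the Hölder part is absorbed once $n$ is large enough. Fix $|t|\le T$, $j\ge 0$ and $k\ge k_0$, and let $h$ satisfy $\|h\|_*\le 1$. We must show that $\|\cL_{j,it}^k h\|_*\le 1$, that is, we must control both $\|\cL_{j,it}^k h\|_\infty$ and $v_\epsilon(\cL_{j,it}^k h)/(2C_1)$ by $1$.

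\textbf{Sup norm.} By Lemma \ref{ll2}(a),
$$
\|\cL_{j,it}^k h\|_\infty\le \|\cL_j^k|h|\|_\infty .
$$
Since $\cL_j^k$ is a Markov operator, $\cL_j^k|h|(x)=\mathbb E[|h(A_{j,k}x)|]\le \|h\|_\infty\le 1$. So this part is immediate and holds for every $k$.

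\textbf{Hölder part.} By the second inequality of Lemma \ref{ll1}, valid for $|t|\le T$,
$$
v_\epsilon(\cL_{j,it}^k h)\le C_1\big[\|h\|_\infty+\theta_1^k v_\epsilon(h)\big].
$$
Inserting $\|h\|_\infty\le 1$ and $v_\epsilon(h)\le 2C_1$ gives
$$
v_\epsilon(\cL_{j,it}^k h)\le C_1\big[1+2C_1\theta_1^k\big].
$$
For $k\ge k_0$ we have $2C_1\theta_1^k\le 2C_1\theta_1^{k_0}\le 1$, because $\theta_1<1$ makes $k\mapsto 2C_1\theta_1^k$ decreasing. Hence
$$
v_\epsilon(\cL_{j,it}^k h)\le 2C_1, \qquad\text{i.e.}\qquad \frac{v_\epsilon(\cL_{j,it}^k h)}{2C_1}\le 1 .
$$

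\textbf{Conclusion.} Combining the two parts gives $\|\cL_{j,it}^k h\|_*\le 1$. Nothing in the argument depends on $t\in[-T,T]$, on $j$, or on $k\ge k_0$ beyond the displayed bounds, so taking the supremum yields \eqref{Norm bound}.

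The only point needing care is that $C_1$ in Lemma \ref{ll1} must be uniform in $j$, in $n$ and in $|t|\le T$; this is how that lemma is stated, so there is no real obstacle. If one reads Lemma \ref{ll1} as applying only to $n$ that are multiples of the block length used in its iteration, then $C_1$ should be enlarged to absorb the bounded remainder, which is where the uniform bound on $N(A_j)$ from Assumption \ref{Ass0} enters.
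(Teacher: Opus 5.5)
Your proof is correct and is the paper's argument: the paper simply says the corollary follows by combining Lemma \ref{ll2}(a) (for the sup-norm part) with Lemma \ref{ll1} (for the H\"older part). You have written out the one-line computation showing that $\frac{v_\epsilon(\cL_{j,it}^k h)}{2C_1}\le \frac12 + C_1\theta_1^k\le 1$ whenever $2C_1\theta_1^k\le 1$.
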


\begin{proof}
The result follows by combining Lemmata \ref{ll1} and \ref{ll2}(a).
\end{proof}

\begin{corollary}\label{MainCor}
Given $r\in(0,\frac12)$ there exists $k_2=k_2(r)\in \bbN$ with the following properties.
If for some $l$, $m\geq k_0=k_0(C_1)$ and $t\in[-T,T]$ we have 
$\|\cL_{l,it}^{k_2+m}\|_{*}>1- \beta(r)$ (where  $\beta(r)$ comes from Lemma \ref{ll2}), then there exists a function $h$ with $\|h\|_*\leq 1$ such that 
${\min_x} |\cL_{l+k_2,it}^{m}h{(x)}|>1-r$. 
\end{corollary}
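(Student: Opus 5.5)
We argue by contraposition, using Lemma \ref{ll2}(b) together with the semigroup structure and Corollary \ref{CorNonExpanding}. Take $k_2=k_1(r)$, the number from Lemma \ref{ll2}(b), enlarged if necessary so that $k_2\geq k_0$.

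Suppose $\|\cL_{l,it}^{k_2+m}\|_*>1-\beta(r)$. Then there is $g$ with $\|g\|_*\leq 1$ and $\|\cL_{l,it}^{k_2+m}g\|_*>1-\beta(r)$. Write
$$\cL_{l,it}^{k_2+m}=\cL_{l,it}^{k_2}\circ\cL_{l+k_2,it}^{m}$$
and set $h=\cL_{l+k_2,it}^{m}g$. Since $m\geq k_0$, Corollary \ref{CorNonExpanding} gives $\|h\|_*\leq 1$.

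We claim that $\min_x|h(x)|>1-r$. Suppose instead that $|h(x_0)|\leq 1-r$ for some $x_0$. By Lemma \ref{ll2}(a) and then (b),
$$\|\cL_{l,it}^{k_2}h\|_\infty\leq\|\cL_l^{k_2}|h|\|_\infty\leq 1-\beta(r).$$
Here we apply (b) to the function $|h|$. We have $\||h|\|_\infty\leq 1$, and $v_\epsilon(|h|)\leq v_\epsilon(h)$, so $\||h|\|_*\leq 1$, and $||h|(x_0)|\leq 1-r$. Lemma \ref{ll2}(b) is stated with $k_1$ iterates. If $k_2>k_1$, we use $\|\cL_{l}^{k_2-k_1}\|_\infty\leq 1$, which holds because $\cL_l$ is a Markov operator, and apply (b) at the shifted index $l+k_2-k_1$.

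This controls only the sup norm. We also need the Hölder part of $\|\cL_{l,it}^{k_2}h\|_*$. Lemma \ref{ll1} gives
$$\frac{v_\epsilon(\cL_{l,it}^{k_2}h)}{2C_1}\leq \frac12\|h\|_\infty+\frac12\theta_1^{k_2}v_\epsilon(h)\leq \frac12+\theta_1^{k_2}C_1.$$
The step I expect to need care is making this at most $1-\beta(r)$. The constants are not a problem. We may shrink $\beta(r)$ so that $\beta(r)\leq 1/4$, and we may enlarge $k_2$ so that $C_1\theta_1^{k_2}\leq 1/4$. Then the Hölder term is at most $3/4\leq 1-\beta(r)$, so $\|\cL_{l,it}^{k_2+m}g\|_*\leq 1-\beta(r)$. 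This contradicts the choice of $g$.

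Enlarging $k_2$ is compatible with Lemma \ref{ll2}(b), since the padding argument above works for any $k_2\geq k_1$. Shrinking $\beta(r)$ only weakens the hypothesis $\|\cL_{l,it}^{k_2+m}\|_*>1-\beta(r)$, so the statement remains consistent. Hence $\min_x|\cL_{l+k_2,it}^m g(x)|>1-r$, and $h=\cL_{l+k_2,it}^m g$ is the desired function.

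If the statement is meant with $h$ itself as the function being iterated, i.e. $\min_x|\cL^m_{l+k_2,it}h(x)|>1-r$ with $\|h\|_*\leq1$, the argument is the same with the roles renamed: take $h=g$.
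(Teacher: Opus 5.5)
Your proposal is correct and follows essentially the paper's argument. It argues by contraposition: Corollary~\ref{CorNonExpanding} gives $\|h\|_*\le 1$, Lemma~\ref{ll1} bounds the H\"older part of $\cL^{k_2}_{l,it}h$, and Lemma~\ref{ll2}(a),(b) applied to $|h|$ bounds the sup part. It is in fact more careful than the printed proof in three places. Your factorization $\cL^{k_2+m}_{l,it}=\cL^{k_2}_{l,it}\circ\cL^{m}_{l+k_2,it}$ is the one consistent with $\cL^n_j=\cL_j\circ\cdots\circ\cL_{j+n-1}$ and with the stated conclusion. You justify using (b) with $k_2\ge k_1$ iterates by padding with the Markov operator $\cL^{k_2-k_1}_l$. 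And you let $C_1$ enter the choice of $k_2$, which the paper's condition on $k_2^*$ omits.

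One small slip: shrinking $\beta(r)$ strengthens rather than weakens the hypothesis $\|\cL^{k_2+m}_{l,it}\|_*>1-\beta(r)$. The move is still legitimate because Lemma~\ref{ll2}(b) remains true for any smaller $\beta(r)$. It is also unnecessary: testing (b) on the constant function $1-r$ shows $\beta(r)\le r<\frac12$, so you could simply require $C_1\theta_1^{k_2}\le\frac12-\beta(r)$.
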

\begin{proof}
Let $k_2^*(r)$ be the smallest positive  integer $k$ so that $\te_1^{k}\leq \frac12(1-\beta(r))$, where $\beta(r)$ comes from Lemma \ref{ll2}. Then by Lemma \ref{ll1} for every function $H$ such that $\|H\|_*\leq1$ and all $j\in\bbZ$, $s\geq  k_2^*(r)$ and $t\in[-T,T]$ we have 
\begin{equation}\label{Upppp}
 \frac{v_\epsilon(\cL_{j,it}^{s}H)}{2C_1}\leq 1-\beta(r).
\end{equation}
Next,  take $k_2(r)\!\!=\!\!\max(k_2^*(r), k_1(r))$, where $k_1(r)$ comes from Lemma \ref{ll2}(b). 
Suppose that $\|\cL_{j,it}^{k_2(r)+m}\|_{*}\!>\!1\!\!-\!\! \beta(r)$. Then  there is $h$ such that $\|h\|_*\!\leq\!\! 1$ and $\|\cL_{j,it}^{k_2(r)+m}h\|_*
\!>\!1\!\!-\!\! \beta(r)$. Set $H=\cL_{j,it}^m h$. Then
$$
\|\cL_{j,it}^{k_2(r)+m}h\|_*=\|\cL_{j+m,it}^{k_2(r)}H\|_*=\max\left(\|\cL_{j+m,it}^{k_2(r)}H\|_\infty, \frac{v_\epsilon(\cL_{j+m,it}^{k_2(r)}H)}{2C_1}\right)>1-\beta(r).
$$
Now, since $\|h\|_*\leq 1$ and $m\geq k_0$, it follows from \eqref{Norm bound} that $\|H\|_*\leq 1$. Thus, since $k_2(r)\geq k_2^*(r)$ we conclude from \eqref{Upppp} that 
$$
 \frac{v_\epsilon(\cL_{j+m,it}^{k_2(r)}H)}{2C_1}\leq 1-\beta(r).
$$
Hence 
$
\|\cL_{j+m,it}^{k_2(r)}H\|_\infty>1-\beta(r),
$
and so
by Lemma \ref{ll2}(a),
$
\|\cL_{j+m}^{k_2(r)}H\|_\infty>1-\beta(r).
$
Hence, since $k_2(r)\geq k_1(r)$ by (the contrapositive of)  
Lemma \ref{ll2}(b) we have
$$
\min_{x\in X_{j+m}}|H(x)|=\min_{x\in X_{j+m}}\left|\cL_{j,it}^m h(x)\right|
>1-r
$$
and the proof of the corollary is complete.
\end{proof}
Finally, Proposition \ref{PPP} will follow from the following result together with Lemma \ref{CorNonExpanding}.
\begin{lemma}\label{Ll}
For all $0<\delta_0<T$
there exists $L\in\mathbb N$ such that,
$$
\sup_{\delta_0\leq |t|\leq T}\sup_j\|\mathcal L_{j,it}^L\|_*<1.
$$
As a consequence, there are $C,c>0$ that may depend on $\delta_0$ and $T$ such that
$$
\sup_{\delta_0\leq |t|\leq T}\|\mathcal L_{0,it}^n\|_*\leq Ce^{-cn}.
$$
\end{lemma}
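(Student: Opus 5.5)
Since $\|\mathcal L^{k}_{j,it}\|_*\le 1$ for all $k\ge k_0$ (Corollary \ref{CorNonExpanding}), the consequence follows once the first claim is known. If $\|\mathcal L^L_{j,it}\|_*\le 1-\beta$ uniformly in $j$ and in $\delta_0\le|t|\le T$, then we split $\mathcal L^n_{0,it}$ into $\lfloor n/L\rfloor$ blocks of length $L$ and a remainder. Using $\|\cdot\|_*\asymp\|\cdot\|_\epsilon$ and the uniform bound on short blocks, this gives $\|\mathcal L^n_{0,it}\|_*\le C(1-\beta)^{n/L}$.

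The whole task is therefore to prove the first claim, and I will argue by contradiction. Fix $r\in(0,\frac12)$ small, to be chosen later. Suppose that for some large $L=k_2(r)+m$ with $m\ge k_0$ there are $j$ and $t\in[\delta_0,T]$ with $\|\mathcal L^L_{j,it}\|_*>1-\beta(r)$. Corollary \ref{MainCor} then produces $h$ with $\|h\|_*\le1$ such that $|\mathcal L^m_{l,it}h(x)|>1-r$ for every $x$, where $l=j+k_2$. The function $\mathcal L^m_{l,it}h(x)=\mathbb E[e^{it\ln\|A_{l,m}x\|}h(A_{l,m}x)]$ is an average of numbers of modulus at most $1$, and its modulus is at least $1-r$. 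Hence, off a set of probability $O(\sqrt r)$, the integrand is close to a deterministic phase $e^{i\phi(x)}$. Taking an independent copy $A'_{l,m}$ of the product, we get with high probability
\begin{equation*}
e^{it(\ln\|A_{l,m}x\|-\ln\|A'_{l,m}x\|)}\,h(A_{l,m}x)\overline{h(A'_{l,m}x)}\approx 1 .
\end{equation*}
Writing $h=|h|e^{i\psi}$ with $|h|\approx1$ on the relevant points, this says that $t\bigl(\ln\|A_{l,m}x\|-\ln\|A'_{l,m}x\|\bigr)$ is close, modulo $2\pi$, to $\psi(A'_{l,m}x)-\psi(A_{l,m}x)$. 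The right-hand side is a Hölder function of the two projective images only.

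Next we use Assumption \ref{Ass2} to convert the "with high probability" statement into one about every pair of support points. First peel off the last $\ell_0$ matrices, writing $A_{l,m}=A_{l+m-\ell_0,\ell_0}A_{l,m-\ell_0}$. By the second part of Assumption \ref{Ass2} (with $k_\delta\le m-\ell_0$), we may steer the common direction $A_{l,m-\ell_0}x$ close to a prescribed unit vector $x$ with positive probability. By the first part, we may take the last $\ell_0$ blocks of the two copies close to arbitrary $\alpha,\beta$ in $\mathrm{supp}(A_{l+m-\ell_0,\ell_0})$, again with probability bounded below independently of $r$. Choosing $r$ small compared with these probabilities, the approximate identity must hold near such configurations. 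Now choose $\alpha x=cy$ and $\beta x=dy$. Then both projective images equal $y$, so the $\psi$-terms cancel, while the cocycle difference equals $\ln|c|-\ln|d|$. The Hölder bound on $\psi$ from $\|h\|_*\le1$, together with \eqref{N g2}, controls the errors. We conclude that $t(\ln|c|-\ln|d|)\in 2\pi\mathbb Z+O(\eta)$ for every element of $\Gamma_{j,\ell_0}$, with $\eta\to0$ as $r,\delta\to0$.

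Finally this is contradicted using Assumption \ref{Ass3}. Under (ii), take $h=2\pi/t\in[2\pi/T,2\pi/\delta_0]$; the set $\Gamma_{j,\ell_0}$ stays a uniform distance from $h\mathbb Z$, which is incompatible with the previous conclusion for small $\eta$. Under (i), connectedness of the supports makes the set of values $\ln|c|-\ln|d|$ connected, since it is a continuous image of a connected set once the configuration is continued. A connected set lying within $O(\eta)$ of $(2\pi/t)\mathbb Z$ and containing $0$ (take $\alpha=\beta$) must have diameter $O(\eta)$, which contradicts $\sup\Delta_{j,\ell_0}$ being bounded below. The main obstacle is the middle step: making the approximation quantitative uniformly in $j$ and $t$, handling points where $|h|$ might be small, and justifying the simultaneous steering of the common direction while fixing the exact pair $(\alpha,\beta)$. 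For the latter I would first try conditioning on $A_{l,m-\ell_0}$ and applying the two parts of Assumption \ref{Ass2} successively.
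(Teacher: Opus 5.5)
Your architecture is the paper's: contradiction via Corollary \ref{MainCor}, a two-copy phase estimate, the first part of Assumption \ref{Ass2} to pass from typical to all support configurations, and then Assumption \ref{Ass3}. The reduction to the first claim is fine, and so is the deterministic-phase bound, which also handles small $|h|$. Case (ii) is fine once the steering step is repaired as below. The genuine gap is case (i). Connectedness of $\mathrm{supp}(A_{j,\ell_0})$ does not make the set of values $\ln|c|-\ln|d|$ connected. The constraint $\alpha x=cy$, $\beta x=dy$ says that $x$ is a real eigenvector of $\beta^{-1}\alpha$, and such eigenvectors appear and disappear as $(\alpha,\beta)$ moves in a connected set. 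For instance (purely as a statement about this set), take $d=2$ and the arc $\{\lambda^{s}R_{2\pi s}:0\le s\le 1\}$, with $R_\theta$ the rotation by $\theta$ and $\lambda>1$. Collinearity of $\lambda^sR_{2\pi s}x$ and $\lambda^uR_{2\pi u}x$ forces $s-u\in\frac12\mathbb Z$, so the values are $\{0,\pm\frac12\ln\lambda,\pm\ln\lambda\}$, a subset of a lattice. Hence ``within $O(\eta)$ of $(2\pi/t)\mathbb Z$ and containing $0$'' gives no contradiction. The paper applies connectedness \emph{before} restricting to collinear pairs. Fix $x$ and write $h=|h|e^{i\phi}$. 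The real continuous function $\Theta(\alpha,\beta)=t(\ln\|\alpha x\|-\ln\|\beta x\|)+\phi(\alpha x)-\phi(\beta x)$ on the connected set $\mathrm{supp}(\mu_{j,\ell})^2$ stays within $O(\eta)$ of $2\pi\mathbb Z$ and vanishes on the diagonal, hence is $O(\eta)$ everywhere. Only then does one specialize to $\alpha x=cy$, $\beta x=dy$, where the $\phi$-terms cancel exactly. This gives $|t(\ln|c|-\ln|d|)|=O(\eta)$, which contradicts $\inf_j\sup\Delta_{j,\ell_0}>0$ because $|t|\ge\delta_0$.

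A second, repairable problem is the steering step. With fully independent copies there is no common direction. Steering $A_{l,m-\ell_0}x$ and $A'_{l,m-\ell_0}x$ separately near $x_0$ leaves the uncontrolled term $\ln\|A_{l,m-\ell_0}x\|-\ln\|A'_{l,m-\ell_0}x\|$ in the cocycle difference. Your deterministic-phase bound is a statement about a single copy, so it holds under any coupling with the correct marginals. You may therefore let the two copies share $A_{l,m-\ell_0}$, and then this term cancels. Steering is in fact unnecessary, because your lower bound $|\mathcal L^m_{l,it}h(x)|>1-r$ holds for \emph{every} $x$. Write $\mathcal L^m_{l,it}h=\mathcal L^{\ell_0}_{l,it}H$ with $H=\mathcal L^{m-\ell_0}_{l+\ell_0,it}h$; then $\|H\|_*\le1$ once $m-\ell_0\ge k_0$. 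Now run the two-copy argument on the first $\ell_0$ factors at an arbitrary starting point $x$. This is essentially the paper's arrangement, which compares support points $\gamma\alpha$ and $\gamma\beta$ with a common \emph{left} factor $\gamma$. Since $\gamma\alpha x=c\,\gamma y$ and $\gamma\beta x=d\,\gamma y$, the projective images coincide and the cocycle difference is exactly $\ln|c|-\ln|d|$.
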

\begin{proof}
Fix some $0<\delta_0<T$ and take some $t$ such that $\delta_0\leq |t|\leq T$.  Let us take $\ell\in\mathbb N$ and assume that for some $j$ and $h$ with $\|h\|_*\leq 1$ we have
\begin{equation}\label{uu}
 \min_{x}|\mathcal L_{j,it}^\ell h(x)|>1-r_\ell   
\end{equation}
with $0<r_\ell<\frac12$. We claim that there is a way to choose $r_\ell$ such that if the above holds true for some $t$ and all $\ell$ then Assumption \ref{Ass3} fails. 
Assuming the validity of that statement we see that for some $\ell=L$ and  all $j$ and $h$ with $\|h\|_*\leq 1$ we have 
$$
\min_{x}|\mathcal L_{j,it}^\ell h(x)|\leq 1-r_\ell
$$
and thus by Corollary \ref{MainCor}, 
$$
\|\mathcal L_{j,it}^L\|_*\leq 1-\beta(r_L). 
$$

Now, let us show that \eqref{uu} with an appropriate choice of $r_\ell$ violates Assumption \ref{Ass3}. In the course of the proof we will work with $r_\ell$ as parameters that will be determined later. First, notice that for all $x$,
$$
1-r_\ell<|\mathcal L_{j,it}^\ell h(x)|\leq (\mathcal L_{j}^\ell|h|)(x).
$$
Now, suppose that for some $y$ we have $|h(y)|\leq \frac12$. Let us take some $\delta\in(0,1)$ and let $\ell>k_\delta$ where $k_\delta$ comes from Assumption \ref{Ass2}. Then 
$$
1-r_\ell<\mathcal L_{j}^{\ell}|h|(x)=\int\mathbb E[|h(A_{j+\ell-k_\delta,k_\delta}(Bx))|]d\mu_{j,\ell-k_\delta}(B)
$$
where $\mu_{j,s}$ is the law of $A_{j,s}$. Now, for a fixed $B$ let 
$$
\Gamma_{B,\delta}=\Gamma_{B}=\{A: d(A(Bx),y)\leq\delta\}.
$$
Then by Assumption \ref{Ass2},
$$
\mathbb P(d(A_{j+\ell-k_\delta,k_\delta}\in\Gamma_B))\geq \eta(\delta)
$$
for some $\eta(\delta)>0$. Since $|h(y)|\leq \frac12$ we thus see that
 $$
\mathbb E[|h(A_{j+\ell-k_\delta,k_\delta}(Bx))|]
\leq 1-\mathbb P(A_{j+\ell-k_\delta,k_\delta}\in\Gamma_B)
$$
$$+\mathbb P(A_{j+\ell-k_\delta,k_\delta}\in\Gamma_B)\frac12+\mathbb P(A_{j+\ell-k_\delta,k_\delta}\in\Gamma_B)(2C_1)\delta^{\epsilon}
\leq 1-(\frac12-2C_1\delta^\epsilon)\eta(\delta).
 $$
 Therefore, if we choose $\delta$ small enough to ensure $2C_1\delta^\epsilon<1/2$ and $r_\ell$ such for all $\ell>k_{\delta}$ we have $r_{\ell}<(\frac12-2C_1\delta^\epsilon)\eta(\delta)$ then we get a contradiction. We thus conclude that if $\ell$ is large enough then $\min_y|h(y)|>\frac12$. 

 Let us write $h=re^{i\phi}$ with $r>1/2$. Note that for any probability measure $\nu$ on a probability space $\Om$ and  measurable functions $q:\Om\to \bbR$  and $r:\Om\to [0,\infty)$ 
$$
\left|\int_{\Om} r(\om)e^{iq(\om)}d\nu(\om)\right|^2=\left(\int r(\omega)d\nu(\omega)\right)^2
$$
$$
-2\int_{\Om}\int_{\Om}\sin^2\left(\frac12(q(\om_1)-q(\om_2)\right)r(\om_1)r(\om_1)d\nu(\om_1)d\nu(\om_2).
$$
Taking $\nu=\nu_x$ given by $\nu_x(\Gamma)=\mathcal L_{j}^{\ell}(\mathbb I_\Gamma)(x)$ and using that $\frac12\leq r\leq 1$ we see that 
$$
\int\int\sin^2\left(\frac12t(\ln\|Ax\|-\ln\|Bx\|)+\frac12(\phi(Ax)-\phi(Bx))\right)d\mu_{j,\ell}(A)d\mu_{j,\ell}(B)\leq 2r_\ell 
$$
where $\mu_{j,\ell}$ is the law of $A_{j,\ell}$. 

Let us fix some $\alpha,\beta$ in $\text{supp}(\mu_{j,\ell})$ and let us write $\alpha=\alpha_{j+\ell-1}\cdots\alpha_j$ and $\beta=\beta_{j+\ell-1}\cdots\beta_j$. 
Let $N=\sup_{k}\|N(A_k)\|_{L^\infty}<\infty$ and $\zeta_\ell$ be an arbitrary sequence that decays to $0$ as $\ell\to\infty$.  Set
$$
\Gamma_\ell=\{(A,B)\in\text{supp}(\mu_{j,\ell})^2: \max(\|A_k-\alpha_k\|,\|B_k-\beta_k\|)\leq(1+N)^{-4\ell}\zeta_\ell, j\leq k<j+\ell\}
$$
where $A=A_{j+\ell-1}\cdots A_j$ and $B=B_{j+\ell-1}\cdots B_j$.
Then by Assumption \ref{Ass2} and independence of the matrices there is $\epsilon_\ell>0$ (which depends only on $\ell$) such that 
$$
(d\mu_{j,\ell}\times d\mu_{j,\ell})(\Gamma_\ell)\geq \epsilon_\ell.
$$
In fact, we can take 
$$
\epsilon_\ell\asymp (q((1+N)^{-4\ell}\zeta_\ell))^{2\ell}
$$
where 
$$
q(\delta)=\inf_{j\in\mathbb N}\inf_{a\in\text{supp}(A_{j})}\mathbb P(\|A_{j}-a\|\leq\delta).
$$
Notice that when $(A,B)\in\Gamma$ we have 
$$
\max(\|A-\alpha\|, \|B-\beta\|)\leq (1+N)^{-3\ell}\zeta_\ell.
$$
Using that $|\ln a-\ln b|\leq \frac{|b-a|}{\min(a,b)}, a,b>0$, that  $v_\epsilon(\phi)\leq C$ for some constant $C$
and that on $\Gamma$ we have $\max(d(Ax,\alpha x),d(Bx,\beta x))\leq C'N^{\ell}\max(\|A-\alpha\|, \|B-\beta\|)$, we see that for $(A,B)\in\Gamma$, 
$$
\sin^2\left(\frac12t(\ln\|Ax\|-\ln\|Bx\|)+\frac12(\phi(Ax)-\phi(Bx))\right)
$$
$$
\geq \sin^2\left(\frac12t(\ln\|\alpha x\|-\ln\|\beta x\|)+\frac12(\phi(\alpha x)-\phi(\beta x))\right)-2C''|t|\zeta_\ell^\epsilon
$$
for some constant $C''>0$.
We thus conclude that for all choices of $\alpha,\beta$ and $x$,
$$
\sin^2\left(\frac12t(\ln\|\alpha x\|-\ln\|\beta x\|)+\frac12(\phi(\alpha x)-\phi(\beta x))\right)\leq 2r_\ell \epsilon_\ell^{-1}+2C''|t|\zeta_\ell^\epsilon.
$$

Finally, in the connected case we note that if $\alpha,\beta\in\text{supp}(A_{j,\ell_0})$ satisfy $\alpha x=cy$ and $\beta x=dy$ then for every matrix $\gamma$ we have $\gamma(\alpha x)=c\gamma y$ and $\gamma(\beta x)=d\gamma y$. We thus conclude that for all $\ell>\ell_0$, where $\ell_0$ comes from Assumption \ref{Ass3},
\begin{equation}\label{q ell}
q_\ell:=\inf_j\sup(\Delta_{j,\ell})\geq q_{\ell_0}>0    
\end{equation}
where 
$$
\Delta_{j,\ell}=\left\{|\ln|c|-\ln|d||:\exists x,y\not=0, \|x\|=1 \,\,\exists \alpha,\beta\in\text{supp}(A_{j,\ell}),\,\alpha x=cy, \beta x=dy\right\}.
$$
Next, since $(\alpha,\beta,x)\to t(\ln\|\alpha x\|-\ln\|\beta x\|)+\frac12(\phi(\alpha x)-\phi(\beta x))$ is continuous and vanishes when $\alpha=\beta$ then 
$$
t(\ln\|\alpha x\|-\ln\|\beta x\|)+\phi(\alpha x)-\phi(\beta x)=O(r_\ell \epsilon_\ell^{-1}+|t|\zeta_\ell^\epsilon).
$$
Now, let us take two points $x,y$ and matrices $\alpha$ and $\beta$ such that $\alpha x=cy$ and $\beta x=dy$. Then, since $\phi$ is a function on the projective space $\phi(\alpha x)=\phi(\beta x)$ and so there is a constant $C>0$ such that
$$
\left|t(\ln|c|-\ln|d|)\right|\leq C(r_\ell \epsilon_\ell^{-1}+|t|\zeta_\ell^\epsilon).
$$
Taking $r_\ell$ and $\zeta_\ell$ small enough to ensure that 
$$
C(r_\ell \epsilon_\ell^{-1}/\delta_0+\zeta_\ell^\epsilon)/\delta_0<q_\ell
$$
 with $q_\ell$ like in \eqref{q ell} (recall also that $\delta_0\leq |t|$), we get a contraction to \eqref{q ell}.

When the supports are not necessarily connected we note that, as above,
or every $r>1$ there exists $\ell_0>0$ such that with
$$
\Gamma_{j,\ell}=\left\{\ln|c|-\ln|d|:\exists x,y\not=0, \|x\|=1 \,\,\exists \alpha,\beta\in\text{supp}(A_{j,\ell}),\,\alpha x=cy, \beta x=dy\right\}
$$
we have $\Gamma_{j,\ell_0}\subset \Gamma_{j,\ell}$ for all $\ell\geq \ell_0$. Thus, for all $r>1$,
$$
\inf_{j}\inf_{r^{-1}\leq h\leq r}\text{dist}(\Gamma_{j,\ell},h\mathbb Z)\geq\inf_{j}\inf_{r^{-1}\leq h\leq r}\text{dist}(\Gamma_{j,\ell_0},h\mathbb Z) >0.
$$
Next, from the previous estimate we get that  there is an integer valued  function $Z_{j,\ell}(\alpha x,\beta x)$ such that 
$$
t(\ln\|\alpha x\|-\ln\|\beta x\|)+\phi(\alpha x)-\phi(\beta x)=2\pi Z_{j,\ell}(t,\alpha x,\beta x)+O(2r_\ell \epsilon_\ell^{-1}+\zeta_\ell^\epsilon).
$$
Taking again $\alpha,\beta$ $x,y$ and $c,d$ such that $\alpha x=cy, \beta x=dy$ we see that the set of all possible values $\ln|c|-\ln|d|$ is $O(r_\ell \epsilon_\ell^{-1}+\zeta_\ell^\epsilon)$ close to the lattice $(2\pi/t)\mathbb Z$, which by taking $r_\ell$ and $\zeta_\ell$ small enough (but uniformly in $\delta_0\leq |t|\leq T$) contradicts the second possibility in Assumption \ref{Ass3}.
\end{proof}
\qed

\section{High order Edegworth expansions in the connected case; proof of Theorem \ref{EdgHigh}}
First, using \eqref{Mom}, the properties of the operators $\mathcal L_{j,z}$ and the arguments in \cite{DolgHaf PTRF 1} it follows that for all $m\in\mathbb N$ there is $r_m>0$ such that 
 with $\Lambda_{n,x}(t)=\ln\mathbb E[e^{itS_n(x)}]$ we have 
\begin{equation}\label{cum1}
 \sup_{t\in[-r_m,r_m]}|\Lambda_{n,x}^{(m)}(t)|=O((\sigma_n(x))^2).   
\end{equation}
Combining this with
\cite[Proposition 25]{DolgHaf PTRF 1} and Proposition \ref{PPP}
in order to prove Theorem \ref{EdgHigh} it is enough to show that for all $a,b\geq 1$ and all $r\geq 2$, 
$$
\int_{b\leq |t|\leq an^{(r-1)/2}}\frac{|\mathbb E[e^{itS_n(x)}]|}{|t|}dt=o(n^{-r/2})
$$
where we have taken into account that $\sigma_n(x)=O(\sqrt n)$.

Now, let us take some $b\geq 1$. Let us fix some real $t$ such that $|t|\geq b$ and define a norm by setting
$$
\|h\|_t=\max\left(\|h\|_\infty, \frac{v_\epsilon(h)}{C|t|}\right)
$$
where $C$ is a sufficiently large constant. Then repeating the arguments in the proof of  Proposition \ref{PPP} reveals that in Lemma \ref{ll2}  we can take $k_1(r)=k_1(r,t)=k_{(\frac{r}{4C|t|})^{1/\epsilon}}$ where $k_\delta$ comes from Assumption \ref{Ass2}. Moreover, in the proof of Lemma \ref{Ll} to ensure that $\min_y|h(y)|>1/2$ we can take $\ell>k_{(\frac{1}{8C|t|})^{1/\epsilon}}$. Taking $r_\ell$ and $\zeta_\ell$ small enough in that proof we see that there  are constants $c,r_0>0$ such that with 
$$
k_1(t)=\max\left(k_{(\frac{r_0}{4C|t|})^{1/\epsilon}},k_{(\frac{1}{8C|t|})^{1/\epsilon}}\right)
$$
for all $j$ and all $\ell\geq k_1(t)$ we have 
$$
\|\mathcal L_{j,it}^{k_1(t)}\|_{t}\leq 1-c.
$$
We thus conclude that for some constant $\lambda>0$ we have
$$
\|\mathcal L_{0,it}^{n}\|_{t}\leq e^{-\lambda n/k_1(t)} 
$$
and so
$$
\int_{b\leq |t|\leq an^{(r-1)/2}}\frac{|\mathbb E[e^{itS_n(x)}]|}{|t|}dt\leq \int_{b\leq |t|\leq an^{(r-1)/2}}e^{-\lambda n/k_1(t)}dt. 
$$
Now under the assumptions of Theorem \ref{EdgHigh} we have $k_1(t)=o(n^w)$ when $|t|\leq an^{(r-1)/2}$ for some $w<1$, and thus the above right hand side is of order $o(n^{-r/2})$.


\end{document}